\newcommand{\contract}{\mathord{\varparallelinv}}
\begin{document}

\title{The Asymptotic Diameter of Cyclohedra\thanks{Research funded by Ville de Paris through {\'E}mergences project ``Combinatoire {\`a} Paris''.}
}


\author{Lionel Pournin
}


\institute{Lionel Pournin \at
              LIPN, Universit{\'e} Paris 13, Villetaneuse, France \\
              \email{lionel.pournin@univ-paris13.fr}           
}

\date{$\mbox{ }$}

\maketitle

\begin{abstract}
It is shown here that the diameter of the $d$-dimensional cyclohedron is not greater than $\lceil 5d/2\rceil-2$. It is also shown that the $5/2$ coefficient in this upper bound is asymptotically sharp. More precisely, the $d$-dimensional cyclohedron has diameter at least $5d/2-4\sqrt{d}-4$.
\end{abstract}

\section{Introduction}
\label{Csection.-1}

Cyclohedra have first been introduced by Raoul Bott and Clifford Taubes \cite{BottTaubes1994} as combinatorial objects. They have been later constructed as polytopes by Martin Markl \cite{Markl1999} and by Rodica Simion \cite{Simion2003}. In her article, Rodica Simion describes them as type-B analogues of associahedra. It turns out that cyclohedra belong to several families of polytopes such as graph-associahedra \cite{CarrDevadoss2006,Devadoss2009}, and generalized associahedra \cite{ChapotonFominZelevinsky2002,FominZelevinsky2003}. The latter family, introduced by Sergey Fomin and Andrei Zelevinsky \cite{FominZelevinsky2003}, extends associahedra to any finite Coxeter groups: cyclohedra are associahedra of type B and C, and (classical) associahedra are associahedra of type A. The third infinite subfamily of generalized associahedra is that of the associahedra of type D. All other generalized associahedra have exceptional types, and there is a finite number of them.

The asymptotic diameter of the classical associahedra has been known for more than twenty-five years: Daniel Sleator, Robert Tarjan, and William Thurston have shown that the diameter of the $d$-dimensional associahedron is $2d-4$ when $d$ is large enough \cite{SleatorTarjanThurston1988}. They have also conjectured that this diameter is equal to $2d-4$ as soon as $d>9$. This conjecture has been settled recently by the author \cite{Pournin2014}. Even more recently, Cesar Ceballos and Vincent Pilaud have shown that the diameter of the $d$-dimensional associahedron of type D is exactly $2d-2$ for all $d$ \cite{CeballosPilaud2014}. 

The last infinite subfamily of generalized associahedra whose diameter is not known exactly is that of cyclohedra. The asymptotic diameter of these polytopes is given in this article using the same techniques as in \cite{Pournin2014}. More precisely, it is shown that the diameter $\Delta$ of the $d$-dimensional cyclohedron is not greater than $\lceil 5d/2\rceil-2$ and not less than $5d/2-4\sqrt{d}-4$. Therefore this diameter grows like $5d/2$ when $d$ is large. More precisely:
$$
\lim_{d\rightarrow\infty}\frac{\Delta}{d}=\frac{5}{2}\mbox{.}
$$

The proofs will rely on the combinatorial interpretation of cyclohedra given in \cite{Simion2003}. Informally, the vertices of the $d$-dimensional cyclohedron correspond to the centrally symmetric triangulations of a polygon with $2d+2$ vertices, and its edges to flips between these triangulations. This combinatorial interpretation will be formally described in Section \ref{Csection.0}, and used in the same section to find a general upper bound on the diameter of cyclohedra. Particular pairs of centrally symmetric triangulations are further introduced in Section \ref{Csection.1}. These pairs will provide a lower bound on the diameter of cyclohedra that is asymptotically sharp. This bound will be derived at the end of Section \ref{Csection.1} from a general inequality that will be proven in Section \ref{Csection.3}. This inequality will be obtained using the same techniques as in \cite{Pournin2014}. These techniques, originally developed in the case of arbitrary triangulations of convex polygons, are adapted to centrally symmetric triangulations in Section \ref{Csection.2}. In Section \ref{Csection.4}, a short discussion on the diameter of low-dimensional cyclohedra completes the article.

\section{An upper bound on the diameter of cyclohedra}
\label{Csection.0}

Consider a positive integer $d$ and a convex polygon $\pi$ with $2d+2$ vertices. Any set of two distinct vertices of $\pi$ will be referred to as an edge on $\pi$. The elements of an edge will be called its vertices. An edge whose convex hull does not intersect the interior of $\pi$ is a boundary edge of $\pi$. Consider a vertex $x$ of $\pi$. The vertex of $\pi$ opposite $x$ will be denoted by $\bar{x}$. In other words, $x$ and $\bar{x}$ are separated by exactly $d$ vertices along the boundary of $\pi$. An edge on $\pi$ whose two vertices are opposite vertices of $\pi$ is referred to as a diagonal of $\pi$. A triangulation of $\pi$ is a maximal set of pairwise non-crossing edges on $\pi$. Note that any triangulation of $\pi$ contains all the boundary edges of $\pi$. These edges will also be referred to as the boundary edges of the triangulation. The other edges of a triangulation will be called its interior edges.

\begin{definition}
A triangulation of $\pi$ is called centrally symmetric when it contains edge $\{\bar{x},\bar{y}\}$ as soon as it contains edge $\{x,y\}$.
\end{definition}

Observe that any centrally symmetric triangulation of $\pi$ contains exactly one diagonal of $\pi$. Consider a centrally symmetric triangulation $T$ of $\pi$ and an interior edge $\{x,x'\}$ of $T$. There is a unique quadrilateral whose four boundary edges belong to $T$ and that admit $\{x,x'\}$ as one of its diagonals. Denote by $y$ and $y'$ the two vertices of this quadrilateral distinct from $x$ and from $x'$. 
\begin{figure}
\begin{centering}
\includegraphics{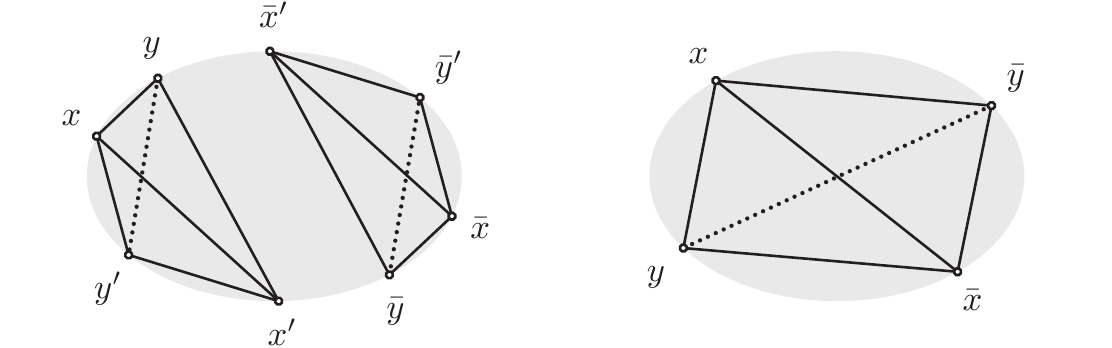}
\caption{The flip of edge $\{x,x'\}$ when $x'\neq\bar{x}$ (left), and when $x'=\bar{x}$ (right). In both cases, the edges introduced by the flip are dotted.}\label{Cfigure.flip}
\end{centering}
\end{figure}
The quadrilateral with vertices $x$, $x'$, $y$, and $y'$ is sketched in Fig. \ref{Cfigure.flip} when $x'\neq\bar{x}$ (left of the figure) and when $x'=\bar{x}$ (right of the figure). Observe that this quadrilateral is distinct from its symmetric in the former case, and coincides with it in the latter case. In particular, if $x'=\bar{x}$, then $\{x,x'\}$ and $\{y,y'\}$ are two diagonals of $\pi$. The flip operation can be defined as follows:

\begin{definition}
The operation of flipping edge $\{x,x'\}$ in triangulation $T$ consists in replacing edges $\{x,x'\}$ and $\{\bar{x},\bar{x}'\}$ within $T$ by $\{y,y'\}$ and $\{\bar{y},\bar{y}'\}$.
\end{definition}

This operation is sketched in Fig. \ref{Cfigure.flip}. Note that it results in a centrally symmetric triangulation of $\pi$ distinct from $T$. Moreover, by central symmetry, the flip of edge $\{\bar{x},\bar{x}'\}$ in $T$ is the same operation as the flip of edge $\{x,x'\}$ in this triangulation. Note that flips are defined here in order to preserve central symmetry. This condition, specific to the case of cyclohedra, is not usually required in other contexts (see for instance \cite{DeLoeraRambauSantos2010}).

Now consider the graph whose vertices are the centrally symmetric triangulations of $\pi$, and whose edges connect two triangulations when they can be obtained from one another by a flip. This graph is isomorphic to the graph of the $d$-dimensional cyclohedron (see Theorem 1 in \cite{Simion2003}). Therefore, the distance of two vertices in the graph of a cyclohedron is also the minimal number of flips one needs to perform in order to transform the centrally symmetric triangulations corresponding to these vertices into one another. In particular, one can describe the paths in the graph of cyclohedra using a succession of centrally symmetric triangulations related by flips. Consider two centrally symmetric triangulations $T^-$ and $T^+$ of a convex polygon. A path of length $k$ from $T^-$ to $T^+$ is a sequence $(T_i)_{0\leq{i}\leq{k}}$ so that $T_0=T^-$, $T_k=T^+$ and $T_i$ can be transformed into $T_{i+1}$ by a flip whenever $0\leq{i}<k$. Call $P$ the pair $\{T^-,T^+\}$. A shortest path between $T^-$ and $T^+$ will be referred to as a \emph{geodesic} between these triangulations. The distance of $T^-$ and $T^+$, or equivalently, the distance of pair $P$ is the length of any geodesic between $T^-$ and $T^+$. This distance will be denoted by $\delta(P)$ in the following of this article.

One can obtain an upper bound on the distance between two centrally symmetric triangulations of a convex polygon using the same general idea than that of the proof of Lemma 2 from \cite{SleatorTarjanThurston1988}:

\begin{theorem}\label{Ctheorem.up}
The distance of two centrally symmetric triangulations of a convex polygon with $2d+2$ vertices is not greater than $\lceil{5d/2}\rceil-2$.
\end{theorem}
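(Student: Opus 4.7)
The plan is to adapt the argument of Lemma~2 in \cite{SleatorTarjanThurston1988} to the centrally symmetric setting. For a vertex $x$ of $\pi$, I would introduce the canonical triangulation $F_x$: the centrally symmetric triangulation whose unique diagonal is $\{x,\bar{x}\}$ and whose restriction to each of the two half-polygons is the fan rooted at $x$ (resp.\ $\bar{x}$). All $2d-1$ interior edges of $F_x$ are then incident to $x$ or $\bar{x}$. The strategy is to choose $x$ suitably and to bound $\delta(T^-,F_x)+\delta(F_x,T^+)$, whence the triangle inequality yields the desired bound on $\delta(T^-,T^+)$.

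As a warm-up I would prove that if the diagonal of $T$ coincides with $\{x,\bar{x}\}$, then $\delta(T,F_x)\leq d-1$. The diagonal splits $T$ into two mirror-image triangulations of a $(d+2)$-gon; by central symmetry, each flip on one half pairs with its mirror on the other into a single cyclohedron flip, and the classical associahedron bound of $d-1$ flips to reach a fan in the $(d+2)$-gon applies to each paired half. This already yields the theorem in the special case where $T^-$ and $T^+$ share a common diagonal, since then $\delta(T^-,T^+)\leq 2(d-1)\leq\lceil 5d/2\rceil-2$.

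The heart of the argument is the general case, where the two diagonals differ. The extra summand of order $d/2$ in the bound---compared to the asymptotic $2d$ for the associahedron---reflects the dichotomy between flips in the cyclohedron: non-diagonal flips rearrange two interior edges at once thanks to central symmetry, while the unique diagonal flip rearranges only one, making diagonal repositioning intrinsically twice as expensive per edge. I would choose $x$ based on the structure of $T^-$ and $T^+$---for instance, as an endpoint of the diagonal of one of the two triangulations---and construct explicit flip sequences from $T^-$ and $T^+$ to $F_x$ whose lengths sum to at most $\lceil 5d/2\rceil-2$. The main obstacle is the careful bookkeeping needed to interleave diagonal-moving flips with fan-building flips without undoing earlier progress; I would handle this by introducing a potential function counting the interior edges of the current triangulation that agree with $F_x$, refined to weigh diagonal contributions differently from non-diagonal ones, and checking that each flip decreases the potential by an appropriate amount.
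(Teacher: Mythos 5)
Your warm-up step is sound and coincides with the first part of the paper's argument: both triangulations can be pushed to a ``double fan'' with the same diagonal in at most $d-1$ flips each, exactly as in Lemma~2 of \cite{SleatorTarjanThurston1988}. But the heart of the proof --- connecting the two canonical triangulations when their diagonals differ in only about $d/2$ flips --- is precisely the part you leave as a sketch, and the sketch as stated would not deliver the bound. Two concrete issues. First, your canonical form $F_x$ is the wrong one: with the fans rooted at the diagonal endpoints $x$ and $\bar{x}$, the triangulations $F_0$ and $F_x$ share essentially no interior edges, so $\delta(F_0,F_x)$ is of order $d$, and the triangle inequality only gives a bound of order $3d$. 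The paper instead uses an intermediate triangulation $U^+$ whose diagonal is $\{x,\bar{x}\}$ but whose fan is still rooted at vertex $0$ (the endpoint of the \emph{other} triangulation's diagonal); then the quadrilateral around $\{x,\bar{x}\}$ in $U^+$ has vertices $0,x,\bar{0},\bar{x}$, so a single flip of the diagonal produces $\{0,\bar{0}\}$, and the remaining $d-x$ flips just rotate the fan, giving $\delta(U^+,U^-)\leq d-x+1$. Second, you never specify the choice that extracts the $d/2$ saving: one must first relabel (reversing orientation if necessary) so that the diagonal endpoint $x$ of $T^+$ satisfies $x\geq\lfloor d/2\rfloor+1$, whence $3d-x-1\leq\lceil 5d/2\rceil-2$. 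Without these two ingredients, the proposed potential function has nothing concrete to measure, and ``choosing $x$ as an endpoint of one of the diagonals'' does not by itself prevent the diagonal transfer from costing a full $d$ flips.

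Your heuristic explanation of where the extra $d/2$ comes from is also slightly off: the diagonal flip in a cyclohedron removes one edge rather than two, but that accounts for only $O(1)$ extra flips, not $\Theta(d)$. The true source of the $d/2$ term (as the lower bound in Section~\ref{Csection.1} confirms) is the cost of \emph{repositioning} the diagonal, which can only move one step at a time and may need to travel a boundary distance of up to $\lceil d/2\rceil$.
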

\begin{proof}
Consider a convex polygon $\pi$ with $2d+2$ vertices labeled clockwise from $0$ to $2d+1$. Let $T^-$ and $T^+$ be two triangulations of this polygon. One can assume without loss of generality that the unique diagonal of $\pi$ that belongs to $T^-$ is $\{0,\bar{0}\}$. Let $x$ be the vertex of $\pi$ so that $0\leq{x}\leq{d}$ and $\{x,\bar{x}\}$ is the unique diagonal of $\pi$ contained in $T^+$. It can be assumed that $x$ is not less than $\lfloor{d/2}\rfloor+1$ by, if needed, relabeling the vertices of $\pi$ counterclockwise from $0$ to $2d+1$ in such a way that $\bar{0}$ is relabeled $0$.

First consider the centrally symmetric triangulation $U^-$ of $\pi$ sketched on the left of Fig. \ref{Cfigure.up}. The diagonal of $\pi$ that belongs to $U^-$ is $\{0,\bar{0}\}$. Above this diagonal, all the interior edges of $U^-$ are incident to vertex $0$, and below it, they are all incident to vertex $\bar{0}$. Observe that $T^-$ can be transformed into $U^-$ in at most $d-1$ flips. Indeed, if the interior edges of $T^-$ above $\{0,\bar{0}\}$ are not all incident to vertex $0$, then it is always possible to introduce a new edge incident to vertex $0$ into $T^-$ by a single flip. As $T^-$ has exactly $d-1$ interior edges above $\{0,\bar{0}\}$, it can be transformed into $U^-$ in at most $d-1$ flips.

If $x=0$, then $T^+$ can also be transformed into $U^-$ in at most $d-1$ flips using the same procedure. Hence, $T^-$ and $T^+$ have distance at most $2d-2$, and the result holds in this case. It is now assumed that $x\neq{0}$.

Consider the centrally symmetric triangulation $U^+$ of $\pi$ sketched on the right of Fig. \ref{Cfigure.up}. The diagonal of $\pi$ that belongs to this triangulation is $\{x,\bar{x}\}$. All the interior edges of $U^+$ placed on the left of this diagonal are incident to vertex $0$. By central symmetry, the edges of $U^+$ placed on the right of $\{x,\bar{x}\}$ are incident to vertex $\bar{0}$. One can transform $T^+$ into $U^+$ in at most $d-1$ flips.
\begin{figure}[b]
\begin{centering}
\includegraphics{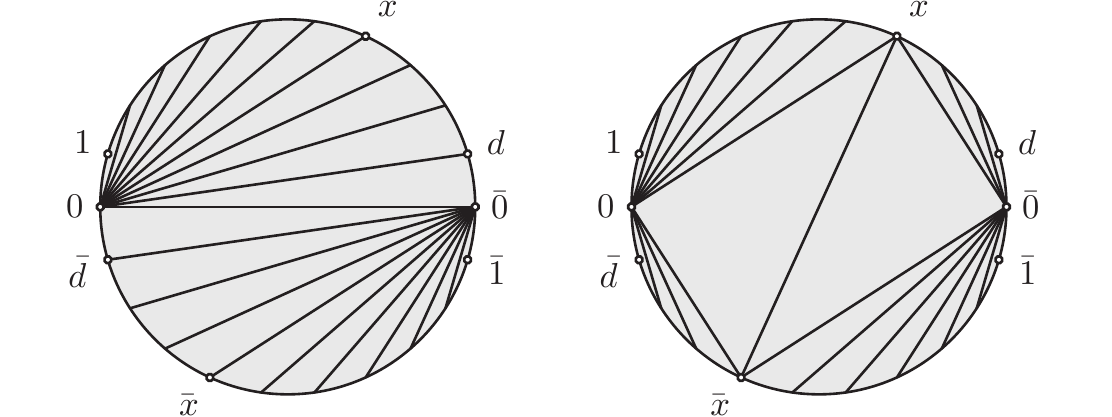}
\caption{The triangulations $U^-$ (left) and $U^+$ (right) used in the proof of Theorem \ref{Ctheorem.up}.}\label{Cfigure.up}
\end{centering}
\end{figure}
Indeed, if the interior edges of $T^+$ on the left of $\{x,\bar{x}\}$ are not all incident to vertex $0$, then it is always possible to introduce a new edge incident to vertex $0$ into $T^+$ by a single flip. As $T^+$ has exactly $d-1$ interior edges on the left of $\{x,\bar{x}\}$, it can be transformed into $U^+$ in at most $d-1$ flips.

Now observe that $U^+$ can be transformed into $U^-$ by first flipping $\{x,\bar{x}\}$, and then edges $\{\bar{0},y\}$, where $y$ ranges from $x$ to $d-1$. The first of these flips introduces diagonal $\{0,\bar{0}\}$ and the subsequent flips introduce edges $\{0,x+1\}$ to $\{0,d\}$. Therefore, $U^+$ and $U^-$ have distance at most $d-x+1$.

The above construction provides a path of length $3d-x-1$ between $T^-$ and $T^+$. As $x$ is at least $\lfloor{d/2}\rfloor+1$, the desired result holds. \qed
\end{proof}

Using the correspondence between the graphs of cyclohedra and the centrally symmetric triangulations of convex polygons and their flips, one derives an upper bound on the diameter of cyclohedra from Theorem \ref{Ctheorem.up}:

\begin{corollary}
For any positive integer $d$, the diameter of the $d$-dimensional cyclohedron is at most $\lceil{5d/2}\rceil-2$.
\end{corollary}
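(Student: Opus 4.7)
The plan is to translate the statement about the cyclohedron's diameter directly into a statement about flip distances between centrally symmetric triangulations, and then invoke Theorem~\ref{Ctheorem.up}. The key ingredient is the graph isomorphism, established by Simion in Theorem~1 of \cite{Simion2003}, between the $1$-skeleton of the $d$-dimensional cyclohedron and the flip graph on centrally symmetric triangulations of a convex $(2d+2)$-gon $\pi$. This isomorphism has already been recalled at the beginning of Section~\ref{Csection.0}, so it can be used here as a black box.

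Once this correspondence is in place, the diameter of the $d$-dimensional cyclohedron (that is, the diameter of its vertex--edge graph) coincides with the maximum, over all unordered pairs $P=\{T^-,T^+\}$ of centrally symmetric triangulations of $\pi$, of the flip distance $\delta(P)$. Theorem~\ref{Ctheorem.up} asserts that every such $\delta(P)$ is at most $\lceil 5d/2\rceil - 2$, so taking the maximum over $P$ preserves the inequality and yields exactly the bound claimed in the corollary.

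The only point that requires any care is to verify that the notion of edge used in Theorem~\ref{Ctheorem.up}, namely a single flip in the sense of the definition given earlier in Section~\ref{Csection.0} (which simultaneously replaces $\{x,x'\}$ and its centrally symmetric counterpart $\{\bar x,\bar x'\}$), corresponds under Simion's isomorphism to an edge of the cyclohedron. This is precisely what Theorem~1 of \cite{Simion2003} guarantees, so no additional argument is needed; the corollary follows in one line from Theorem~\ref{Ctheorem.up}. I do not foresee any obstacle in this deduction, since all combinatorial content has been packaged into the preceding theorem.
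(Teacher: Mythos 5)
Your proposal is correct and follows exactly the route the paper takes: the corollary is derived from Theorem~\ref{Ctheorem.up} via Simion's isomorphism between the graph of the $d$-dimensional cyclohedron and the flip graph on centrally symmetric triangulations of a convex $(2d+2)$-gon. Nothing is missing.
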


The remainder of the article is dedicated to finding lower bounds on the diameter of cyclohedra. In order to do this, particular pairs of centrally symmetric triangulations will be shown to have large distances. The family of these pairs is defined in the next section.

\section{Distant pairs of centrally symmetric triangulations}
\label{Csection.1}

Throughout this section, $d$ is a positive integer and $\pi$ is a convex polygon with $2d+2$ vertices, labeled clockwise from $0$ to $2d+1$. A vertex of $\pi$ will be referred to using any integer congruent to its label modulo $2d+2$. This allows using arithmetic operations on the vertices of $\pi$.

Consider the centrally symmetric triangulations $A^-$ and $A^+$ of $\pi$ respectively sketched on the left and on the right of Fig. \ref{Cfigure.0}. In this representation, only a few interior edges of $A^-$ and $A^+$ are shown and these triangulations need to be further described. Triangulation $A^-$ has a comb with $d-c+2$ teeth at vertex $0$, i.e. exactly $d-c+2$ of its interior edges are incident to vertex $0$. The teeth of the comb at vertex $0$ in $A^-$ are edge $\{0,\bar{0}\}$, and edges $\{0,x\}$ where $x$ ranges from $c$ to $d$. It is assumed in the following that a comb always has at least two teeth.  Triangulation $A^-$ has another comb at vertex $1$, whose number of teeth is $c-b+1$. Note that the interior teeth of each comb are represented as thin lines in the figure. The number of these interior teeth depends on $b$, $c$, and $d$ and is equal to zero when a comb has only two teeth. Above edge $\{1,b\}$, $A^-$ is decomposed into two portions. Within the first of these portions, the interior edges of $A^-$ form a \emph{zigzag}, i.e a simple path that alternates between left and right turns. This zigzag starts at vertex $b$ and its edges cross $\{k,\bar{k}\}$, where $k=\lfloor{b/2}\rfloor+1$. This defines the zigzag univocally. Note that vertices $2$ to $k-2$ are then incident to exactly two interior edges of $A^-$. As shown on the figure, the zigzag ends either at vertex $k-1$ or at vertex $k+1$ (depending in the parity of $b$). The portion of $A^-$ above the zigzag is made up of a single triangle with vertices $k-1$, $k$, and $k+1$. This triangle is referred to as an \emph{ear of $A^-$ at vertex $k$} because two of its edges are the boundary edges of $\pi$ incident to vertex $k$. The other edges of triangulation $A^-$ are obtained by central symmetry.

This description of $A^-$ implicitly assumes that the following inequalities hold, in order for the combs at vertices $0$ and $1$ to have at least two teeth:
\begin{equation}\label{Cequation.0.1}
b<c\leq{d}
\end{equation}

Now consider triangulation $A^+$, shown on the right of Fig. \ref{Cfigure.0}.
\begin{figure}
\begin{centering}
\includegraphics{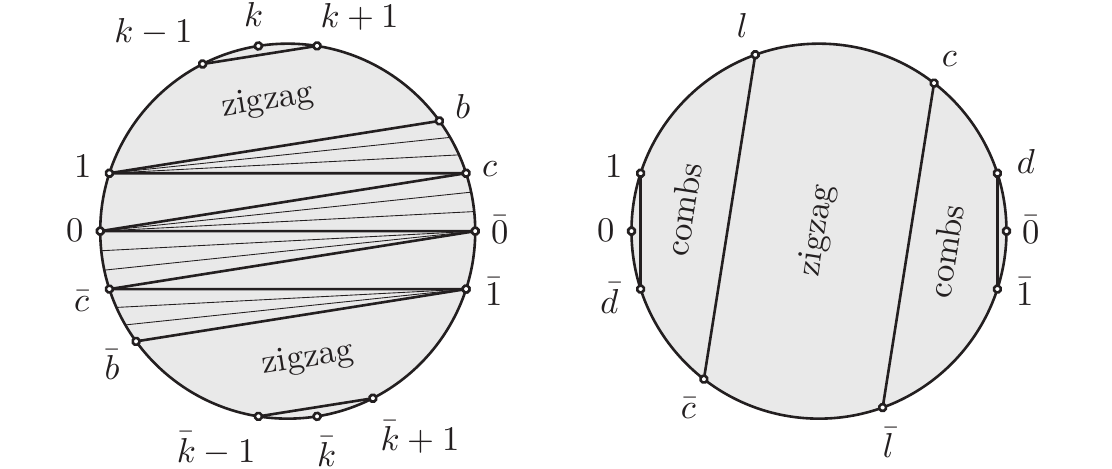}
\caption{Sketch of an $(a,b,c,d)$-pair of centrally triangulations of $\pi$, where $k$ and $l$ are respectively equal to $\lfloor{b/2}\rfloor+1$ and $a+b-c+4$. For the sake of clarity, the placement of the vertices slightly differs in the two triangulations.}\label{Cfigure.0}
\end{centering}
\end{figure}
This triangulation is decomposed into five portions. The portion left of edge $\{1,\bar{d}\}$ is an ear at vertex $0$. The interior edges of $A^+$ in the portion next to this ear are arranged into combs attached at vertices $\bar{c}$ to $\bar{d}$. Recall that each of these combs has at least two teeth. The only teeth depicted in the figure are edges $\{1,\bar{d}\}$ and $\{l,\bar{c}\}$. The other teeth are not shown because the number of combs depends on $c$ and $d$ (it is equal to $d-c+1$), and because the distribution of the teeth among these combs can be made in different ways. Within the central portion, the interior edges of $A^+$ form a centrally symmetric zigzag. This zigzag starts at the vertex labeled $l$ and its edges cross $\{0,\bar{0}\}$, which defines the zigzag univocally. Note that vertices $l$ to $c-1$ are then incident to exactly two interior edges of triangulation $A^+$. The edges of $A^+$ on the right of $\{c,\bar{l}\}$ are obtained by central symmetry.

Any interior tooth of a comb in either $A^-$ or $A^+$ will also be referred to as an interior tooth of the triangulation. Call $\tau^-$ and $\tau^+$ the number of interior teeth of respectively $A^-$ and $A^+$. These numbers are even because $A^-$ and $A^+$ are centrally symmetric. Denote:
$$
a=(\tau^-+\tau^+)/2+1\mbox{.}
$$

In other words, $a$ is obtained by summing the number of interior teeth of the combs at vertices $0$ and $1$ in $A^-$ (i.e. $d-b-1$) with the number of interior teeth of the combs at vertices $c$ to $d$ in $A^+$ (i.e. $l-2$), and by adding $1$ to the resulting quantity. In particular, one can express $l$ as follows:
$$
l=a+b-d+2\mbox{.}
$$

Observe that, by definition, $a$ must be greater by at least one than the number of interior teeth of $A^-$. Hence the following inequality holds:
\begin{equation}\label{Cequation.0.1.5}
d\leq{a+b}\mbox{.}
\end{equation}

While the quadruple $(a, b, c, d)$ completely characterizes $A^-$, several different triangulations $A^+$ may be possible for such a quadruple, depending on how the teeth are distributed among the combs in this triangulation. 

\begin{definition}
The pair $\{A^-,A^+\}$ will be called an $(a,b,c,d)$-pair when, in addition to (\ref{Cequation.0.1}) and (\ref{Cequation.0.1.5}), the following inequality holds:
\begin{equation}\label{Cequation.0.2}
a+\frac{b}{2}+1<d\mbox{.}
\end{equation}
\end{definition}

Note that inequality (\ref{Cequation.0.2}) is equivalent to $l<k$ (this can be checked using the expressions of $k$ and $l$ as functions of $a$, $b$, and $c$). Also observe that there exists an $(a,b,c,d)$-pair if and only if $(a,b,c,d)$ is a quadruple of integers satisfying (\ref{Cequation.0.1}), (\ref{Cequation.0.1.5}), and (\ref{Cequation.0.2}). In particular, there are no $(a,b,c,d)$-pairs when $d$ is less than $4$. Indeed, combining inequalities (\ref{Cequation.0.1.5}) and (\ref{Cequation.0.2}) yields $b\geq3$, and it then follows from (\ref{Cequation.0.1}) that $d$ is not less than $4$.

The following lower bound on the distance of triangulations $A^-$ and $A^+$ will be obtained at the end of Section \ref{Csection.3}:
\begin{theorem}\label{Ctheorem.1}
For any $(a,b,c,d)$-pair $A$,
$$
\delta(A)\geq3d-\left(\frac{b}{2}+\frac{2c-b}{a}+3a+5\right)\mbox{.}
$$
\end{theorem}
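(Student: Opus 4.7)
My plan is to derive Theorem \ref{Ctheorem.1} as a specialization of the general inequality that will be established at the end of Section \ref{Csection.3}. This general inequality adapts the counting method of \cite{Pournin2014} to the centrally-symmetric setting by way of the preliminary machinery of Section \ref{Csection.2}, and should bound $\delta(A)$ below by a weighted sum over (i) the interior edges of $A^-$ that must be destroyed, (ii) the interior edges of $A^+$ that must be created, and (iii) the edges of $\pi$ that must be crossed more than once as the zigzag of $A^-$ is rerouted into the zigzag of $A^+$.

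First, I would read off the structural data of any $(a,b,c,d)$-pair: $A^-$ contributes, per half, $d-c$ interior teeth from the comb at $0$ and $c-b-1$ from the comb at $1$ (totalling $d-b-1$), together with a zigzag starting at vertex $b$ and crossing $\{k,\bar k\}$ where $k=\lfloor b/2\rfloor+1$; while $A^+$ contributes, per half, $l-2=a+b-d$ interior teeth distributed among the $d-c+1$ combs at $\bar c,\ldots,\bar d$, plus a centrally-symmetric zigzag starting at vertex $l=a+b-d+2$ and crossing $\{0,\bar 0\}$. Inequality (\ref{Cequation.0.2}) ensures $l<k$, so the two zigzags occupy disjoint radial sectors of $\pi$, which is exactly what forces the leading term $3d$: each of the three edge-classes above must be rebuilt from scratch over a length-$d$ region. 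Substituting $l=a+b-d+2$ together with the relations $\tau^-/2=d-b-1$ and $\tau^+/2=a+b-d$ into the general inequality, the correction terms should collapse to $b/2+3a+5$ plus an averaged contribution of the form $(2c-b)/a$; the latter non-integer term is the hallmark of a pigeonhole step in Section \ref{Csection.3}, where $2c-b$ units of weight are distributed over the $a$ available combs, forcing at least one comb to pay the average cost.

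The main obstacle will be the exceptional handling of diagonal flips: the diagonal of $A^-$ is $\{0,\bar 0\}$ whereas the diagonal of $A^+$ lies among its combs at $\bar c,\ldots,\bar d$, so any geodesic from $A^-$ to $A^+$ must perform at least one flip of a diagonal of $\pi$, and such flips act on a single edge rather than on a centrally-symmetric pair, as already built into the flip definition of Section \ref{Csection.0}. Threading this asymmetry through the counting argument of Section \ref{Csection.3} is the delicate step, and it is the source of the precise additive constant $-5$ in the statement; once this bookkeeping is in place, the theorem follows by direct substitution of the parameters listed above.
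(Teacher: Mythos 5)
Your proposal does not contain a proof; it describes a ``general inequality'' from which Theorem \ref{Ctheorem.1} would follow by substitution, but Theorem \ref{Ctheorem.1} \emph{is} the general inequality proven at the end of Section \ref{Csection.3} --- there is no further statement to specialize, so the plan is circular as written. Moreover, the mechanism you invoke (a weighted count over interior edges of $A^-$ to destroy, interior edges of $A^+$ to create, and edges ``crossed more than once,'' plus a pigeonhole distributing $2c-b$ units over $a$ combs) does not match any argument in the paper and is not developed enough to be checked. A direct count of the kind you sketch only yields $\delta(A)\geq d$, since each flip removes one centrally symmetric pair of interior edges or one diagonal and $A^-$ has exactly $d-1$ such pairs plus one diagonal; getting from $d$ up to $3d-O(\sqrt{d}\,)$ is the entire difficulty, and nothing in your outline explains how the extra $2d$ flips are forced.

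The paper's actual proof is an induction on $d-\lfloor b/2\rfloor$. The base case $d-\lfloor b/2\rfloor=a+2$ is exactly the elementary count $\delta(A)\geq d$ above, combined with the arithmetic consequence $2d\leq b/2+(2c-b)/a+3a+5$ of the defining inequalities; this is where the additive constant comes from, not from any special treatment of diagonal flips. The inductive step is a case analysis whose cases are complementary: if some geodesic has at least $3$ flips incident to $\{0,1\}$, Lemma \ref{Clemma.X1} deletes two vertices to produce an $(a,b-2,b-1,d-2)$-pair at cost $5$; if $c=d$, Lemma \ref{Clemma.X3} applies; otherwise Theorem \ref{Ctheorem.2} supplies a vertex $x$ with $c\leq x\leq d$ and $\delta(A)\geq\delta(A\contract{x})+3$, followed by Lemma \ref{Clemma.X4} or Lemma \ref{Clemma.X2}. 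In every case the resulting pair is again an $(a,b',c',d')$-pair with $d'-\lfloor b'/2\rfloor$ smaller by one, and the recursive costs telescope to the stated bound; the term $(2c-b)/a$ is bookkeeping that makes the induction close (e.g.\ it absorbs $c'\leq c-t+1$ in the last case), not an averaging step. To repair your proposal you would need to supply this entire recursive structure --- the deletion lemmas of Section \ref{Csection.2} are the tool, but the work lies in the case analysis and in verifying that the family of $(a,b,c,d)$-pairs is stable under each deletion sequence.
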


Most of the remainder of the article is dedicated to proving this theorem. Before doing so, it is first explained how the announced lower bound on the diameter of cyclohedra can be derived from Theorem \ref{Ctheorem.1}.

\begin{theorem}\label{Ctheorem.1.2}
If $d$ is greater than $5$, then there exists a pair of centrally symmetric triangulations of $\pi$ with distance at least $5d/2-4\sqrt{d}-4$.
\end{theorem}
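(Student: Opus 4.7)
The plan is to derive Theorem~\ref{Ctheorem.1.2} from Theorem~\ref{Ctheorem.1} by constructing, for each $d>5$, a specific $(a,b,c,d)$-pair whose distance bound already meets the target. For any integer $a\in\{1,\ldots,d-3\}$ I would set $b=d-a$ and $c=b+1=d-a+1$; a direct check shows that these choices satisfy (\ref{Cequation.0.1}), (\ref{Cequation.0.1.5}) and (\ref{Cequation.0.2}), so that $(a,b,c,d)$ is a valid parameter quadruple. Substituting them into the inequality of Theorem~\ref{Ctheorem.1} and simplifying gives
\[
\delta(A)\ \geq\ \frac{5d}{2}-\frac{5a}{2}-\frac{d+2}{a}-4.
\]
It therefore suffices to exhibit an integer $a\in\{1,\ldots,d-3\}$ such that $5a/2+(d+2)/a\leq 4\sqrt{d}$, since this would immediately yield $\delta(A)\geq 5d/2-4\sqrt{d}-4$.

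Viewed as a quadratic in $a$, the condition $5a/2+(d+2)/a\leq 4\sqrt{d}$ is equivalent to $5a^2-8a\sqrt{d}+2(d+2)\leq 0$, that is, to $a$ lying in the interval
\[
I_d\ =\ \left[\,\frac{4\sqrt{d}-\sqrt{6d-20}}{5},\ \frac{4\sqrt{d}+\sqrt{6d-20}}{5}\,\right].
\]
The length of $I_d$ equals $2\sqrt{6d-20}/5$, which exceeds $1$ for every $d\geq 5$, so $I_d$ contains at least one integer. A short verification then shows that every integer in $I_d$ belongs to $\{1,\ldots,d-3\}$ whenever $d>5$: squaring the lower-endpoint inequality reduces it to $10d-40\sqrt{d}+45>0$, whose discriminant as a polynomial in $\sqrt{d}$ is negative, so the lower endpoint of $I_d$ always exceeds $1$; similarly, the estimate $\sqrt{6d-20}\leq\sqrt{6}\sqrt{d}$ turns the upper-endpoint inequality $(4\sqrt{d}+\sqrt{6d-20})/5\leq d-3$ into a quadratic in $\sqrt{d}$ that is valid for every $d\geq 7$, while the single boundary case $d=6$ is handled directly by noting that $I_6\approx[1.16,\,2.76]$ and $2\in\{1,2,3\}$.

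Picking any such integer $a$ and forming the $(a,b,c,d)$-pair $\{A^-,A^+\}$ with parameters $(a,d-a,d-a+1,d)$ then yields, via Theorem~\ref{Ctheorem.1}, a pair of centrally symmetric triangulations of $\pi$ at distance at least $5d/2-4\sqrt{d}-4$. I expect the only real difficulty to lie in the calibration of the endpoint estimates, which become tight at $d=6$ and so must be carried out with some care; the rest of the argument is routine substitution and a one-variable estimate.
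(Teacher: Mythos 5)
Your proposal is correct and follows essentially the same route as the paper: choose $b=d-a$, $c=d-a+1$, apply Theorem~\ref{Ctheorem.1}, and reduce the problem to finding an integer $a$ in the interval determined by $5a^2-8a\sqrt{d}+2(d+2)\leq 0$. Your endpoint verifications (lower endpoint exceeding $1$ via the negative discriminant of $10u^2-40u+45$, upper endpoint at most $d-3$ for $d\geq 7$, and the direct check at $d=6$) are valid and in fact somewhat more carefully calibrated than the paper's own bounds on the admissible range of $a$.
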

\begin{proof}

Assume that $d>5$. Let $a$ be an integer so that
\begin{equation}\label{Ctheorem.1.2.eq.1}
1\leq{a}<\frac{d-1}{2}\mbox{.}
\end{equation}

Call $b=d-a$ and $c=d-a+1$. For these values of $a$, $b$, and $c$, the quadruple $(a,b,c,d)$ satisfies inequalities (\ref{Cequation.0.1}), (\ref{Cequation.0.1.5}), and (\ref{Cequation.0.2}), and there exists an $(a,b,c,d)$-pair $A$. In fact, $A$ turns out to be unique. More precisely, $A^-$ has two symmetric combs with $a+1$ teeth at vertices $0$ and $\bar{0}$, while its interior edges not involved in the combs form two centrally symmetric zigzags. The interior edges of $A^+$ simply form a zigzag. By Theorem \ref{Ctheorem.1},
\begin{equation}\label{Ctheorem1.2.eq.1}
\delta(A)\geq\frac{5}{2}(d-a)-\frac{d+2}{a}-4\mbox{.}
\end{equation}

In the remainder of the proof, it is shown that $a$ can be chosen in such a way that $\delta(A)\geq5d/2-4\sqrt{d}-4$. This lower bound on $\delta(A)$ can be derived from (\ref{Ctheorem1.2.eq.1}) if and only if $a$ satisfies the following inequality:
\begin{equation}\label{Ctheorem1.2.eq.2}
\frac{5}{2}a+\frac{d+2}{a}\leq4\sqrt{d}\mbox{.}
\end{equation}

Solving (\ref{Ctheorem1.2.eq.2}) for $a$ yields:
\begin{equation}\label{Ctheorem1.2.eq.3}
|a-\frac{4}{5}\sqrt{d}|\leq\frac{\sqrt{6d-20}}{5}\mbox{.}
\end{equation}

As $d>5$, the right-hand side of (\ref{Ctheorem1.2.eq.3}) is greater than $1/2$. Moreover, $4\sqrt{d}/5$ is bounded below by $3/2$, and above by $d/2-1$. Therefore, there exists an integer solution $a$ to inequality (\ref{Ctheorem1.2.eq.2}) that also satisfies (\ref{Ctheorem.1.2.eq.1}). \qed
\end{proof}

Observe that $5d/2-4\sqrt{d}-4$ is negative when $1\leq{d}\leq5$. Hence, because of the correspondence between the graphs of cyclohedra and the centrally symmetric triangulations of convex polygons and their flips, the following result is a direct consequence of Theorem \ref{Ctheorem.1.2}:

\begin{corollary}\label{Ccorollary.2}
For any positive integer $d$, the diameter of the $d$-dimensional cyclohedron is at least $5d/2-4\sqrt{d}-4$.
\end{corollary}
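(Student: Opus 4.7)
The plan is to reduce the claim to Theorem~\ref{Ctheorem.1.2} via the combinatorial model of the cyclohedron and to handle the low-dimensional range by a trivial sign observation. First I would split on whether $d>5$ or $1\leq d\leq 5$. In the first case, Theorem~\ref{Ctheorem.1.2} already produces a pair of centrally symmetric triangulations of a convex polygon with $2d+2$ vertices whose distance (in the flip graph on such triangulations) is at least $5d/2-4\sqrt{d}-4$. The graph isomorphism recalled in Section~\ref{Csection.0} (which comes from Theorem~1 of \cite{Simion2003}) identifies this flip graph with the $1$-skeleton of the $d$-dimensional cyclohedron, so the diameter of the cyclohedron is at least the distance of that pair. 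This gives the bound for $d>5$.

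For the remaining range $1\leq d\leq 5$, I would simply check that $5d/2-4\sqrt{d}-4<0$. A direct evaluation suffices: the function $f(d)=5d/2-4\sqrt{d}-4$ has $f(1)=-11/2$, $f(2)=1-4\sqrt{2}<0$, $f(3)=7/2-4\sqrt{3}<0$, $f(4)=-2$, and $f(5)=9/2-4\sqrt{5}<0$. Since the diameter of any polytope is nonnegative, the inequality of the corollary holds vacuously in this range.

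Combining the two cases yields the statement for every positive integer~$d$. I do not expect any serious obstacle here, since all the real work is absorbed into Theorem~\ref{Ctheorem.1.2}; the only thing to be careful about is to invoke the flip-graph/cyclohedron correspondence explicitly and to mention that $f(d)<0$ on $\{1,\dots,5\}$ so that no separate lower-bound construction is needed in the small cases.
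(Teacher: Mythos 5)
Your proposal is correct and matches the paper's argument exactly: the paper also observes that $5d/2-4\sqrt{d}-4$ is negative for $1\leq d\leq 5$ and otherwise derives the bound from Theorem~\ref{Ctheorem.1.2} via the correspondence between the graph of the cyclohedron and the flip graph of centrally symmetric triangulations. Your numerical checks for the small cases are all accurate, so nothing is missing.
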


\section{Vertex deletions}
\label{Csection.2}

In order to prove Theorem \ref{Ctheorem.1.2}, a set of recursive inequalities on the distance of $(a,b,c,d)$-pairs will be established in Section \ref{Csection.3}. These inequalities will typically bound the difference between the distance of an $(a,b,c,d)$-pair and the distance of an $(a,b',c',d')$-pair below by some positive quantity, where $d'$ is less than $d$. This kind of inequalities will be obtained using the operation of deleting a vertex from a triangulation, already used in \cite{Rambau1997} for triangulations of cyclic polytopes and in \cite{Pournin2014} for triangulations of convex polygons. This section begins with a definition of this operation in the case of centrally symmetric triangulations of convex polygons. Throughout the section, $\pi$ is a convex polygon with an even number of vertices labeled in an arbitrary way. In particular, these labels are not necessarily consecutive integers.

Let $T$ be a centrally symmetric triangulation of $\pi$. Consider a boundary edge of $\pi$ and label the vertices of this edge by $p$ and $q$, in such a way that $q$ immediately follows $p$ clockwise. Any oriented pair such as $(p,q)$ will be called a clockwise oriented boundary edge of $\pi$. In \cite{Pournin2014}, deleting vertex $p$ from $T$ consists in removing $\{p,q\}$ from $T$ and replacing $p$ by $q$ within every other edge of $T$. Informally, this operation amounts to displace a vertex of $\pi$ to its clockwise immediate successor.

When $\pi$ has at least four vertices, deleting (in the sense of \cite{Pournin2014}) a single vertex from $T$ results in a triangulation of a convex polygon with one vertex less (see Proposition 2 in \cite{Pournin2014}). Observe, however, that the triangulation obtained from this operation cannot be centrally-symmetric because the number of its vertices is odd. For this reason, a different deletion operation needs be defined that affects two opposite vertices of $\pi$. More precisely:
\begin{definition}\label{Cdefinition.4}
The operation of deleting vertex $p$ from $T$ consists in removing edges $\{p,q\}$ and $\{\bar{p},\bar{q}\}$ from $T$, and replacing $p$ and $\bar{p}$ by respectively $q$ and $\bar{q}$ within all the other edges of $T$. The resulting set of edges is called $T\contract{p}$.
\end{definition}

The deletion operation defined here in the case of centrally symmetric triangulation alternatively consists in performing two consecutive deletions in the sense of \cite{Pournin2014}. Therefore, by Proposition 2 from \cite{Pournin2014}, $T\contract{p}$ is a triangulation as soon as $T$ is a triangulation with at least $6$ vertices. It also follows from Definition \ref{Cdefinition.4} that $T\contract{p}$ is centrally symmetric.

Consider two triangulations $T^-$ and $T^+$ of $\pi$, and call $P$ the pair $\{T^-,T^+\}$. The deletion of vertex $p$ can be extended to $P$ as follows:
$$
P\contract{p}=\{T^-\contract{i},T^+\contract{p}\}\mbox{,}
$$

Let $T$ be a triangulation of $\pi$. Call $r$ the vertex of $\pi$ so that $\{p,r\}$ and $\{q,r\}$ are two edges of $T$. A flip that transforms $T$ into another triangulation of $\pi$ will be called \emph{incident to edge $\{p,q\}$} when this flip removes either $\{p,r\}$ or $\{q,r\}$. In other words, such flips modify the triangle of $T$ incident to edge $\{p,q\}$. According to the next lemma, for any geodesic between triangulations $T^-$ and $T^+$, there exists a path between $T^-\contract{p}$ and $T^+\contract{p}$, shorter by the number of flips incident to $\{p,q\}$ along the geodesic.

\begin{lemma}\label{Clemma.1}
Let $P$ be a pair of centrally symmetric triangulations $T^-$ and $T^+$ of a convex polygon. Further consider a clockwise oriented boundary edge $(p,q)$ of this polygon. If there exists a geodesic between $T^-$ and $T^+$ along which at least $f$ flips are incident to edge $\{p,q\}$, then
$$
\delta(P)\geq\delta(P\contract{p})+f\mbox{.}
$$
\end{lemma}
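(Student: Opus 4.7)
The plan is to take a geodesic from $T^-$ to $T^+$ realizing $\delta(P)$ flips, apply the deletion operation termwise, and show that every flip incident to $\{p,q\}$ collapses to a trivial move after deletion while the remaining flips stay at most a single flip in the reduced polygon. Concretely, fix a geodesic $(T_i)_{0\leq i\leq k}$ from $T^-$ to $T^+$ with $k=\delta(P)$ and with at least $f$ of its flips incident to $\{p,q\}$. Form the sequence $(T_i\contract p)_{0\leq i\leq k}$. By the discussion following Definition \ref{Cdefinition.4} each $T_i\contract p$ is a centrally symmetric triangulation of the polygon with two fewer vertices, and the sequence runs from $T^-\contract p$ to $T^+\contract p$. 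If the \emph{local claim} is established that (i) every flip $T_i\to T_{i+1}$ incident to $\{p,q\}$ satisfies $T_i\contract p=T_{i+1}\contract p$, and (ii) every flip not incident to $\{p,q\}$ either preserves the deleted triangulation or changes it by a single flip, then the sequence becomes a walk in the flip graph of length at most $k-f$ between $T^-\contract p$ and $T^+\contract p$, yielding $\delta(P\contract p)\leq\delta(P)-f$ as required.

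The verification of the local claim proceeds by case analysis on which edges the flip removes. For part (i), let $r$ denote the apex of the triangle of $T_i$ incident to the boundary edge $\{p,q\}$. A flip incident to $\{p,q\}$ removes either $\{p,r\}$ or $\{q,r\}$, together with its antipode. In either case the associated quadrilateral has $\{p,q\}$ as one of its boundary edges (because $\{p,q\}$ is itself on the boundary of the polygon), with cyclic vertex sequence $p,q,r,s$ or $p,q,t,r$, where $s$ or $t$ is the fourth apex. The deletion replaces $p$ by $q$ and removes $\{p,q\}$; a direct inspection shows that the edges of $T_i$ and of $T_{i+1}$ around this quadrilateral both reduce to the same three edges of a single triangle on the reduced polygon (for example, $\{q,r\},\{r,s\},\{q,s\}$ in the first sub-case). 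Central symmetry handles the antipodal quadrilateral simultaneously, since deleting $p$ also deletes $\bar{p}$. For part (ii), the triangle $\{p,q,r\}$ and its image are preserved by the flip, and the flipped edge is either disjoint from $\{p,q,\bar{p},\bar{q}\}$, in which case the deletion simply transports the flip, or it contains one of these vertices, in which case the same renaming is applied before and after, producing either a single honest flip or, in degenerate situations where the resulting edge would coincide with an already existing edge, an equality.

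The main obstacle will be the case analysis in part (i)--(ii): carefully verifying that the flip and the deletion commute even when the flipped edge touches one of the vertices being merged, and handling the special configurations in which $r$, $s$, or $t$ happens to equal $\bar{p}$ or $\bar{q}$ (which can occur when the flipped edge is itself the unique diagonal of $\pi$ in $T_i$). This is the direct adaptation, to the centrally symmetric setting, of the analogous commutation argument developed in \cite{Pournin2014}; since $T\contract p$ is by definition the composition of two successive deletions in the sense of that paper, the corresponding single-deletion statement applied twice, together with the fact that a single flip in the centrally symmetric sense simultaneously affects an edge and its antipode, delivers the local claim and hence the lemma.
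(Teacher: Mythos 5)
Your proposal is correct and follows essentially the same route as the paper: delete $p$ termwise along a geodesic, observe that flips incident to $\{p,q\}$ collapse (the affected quadrilateral shrinks to a triangle) while the others survive as flips (or equalities), and conclude that the contracted sequence yields a path of length at most $\delta(P)-f$ between $T^-\contract{p}$ and $T^+\contract{p}$. The paper's own justification of your ``local claim'' is no more detailed than your sketch, so nothing essential is missing.
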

\begin{proof}
Let $(T_i)_{0\leq{i}\leq{k}}$ be a geodesic from $T^-$ to $T^+$. Consider the sequence of triangulations $T_0\contract{p}$ to $T_k\contract{p}$. Two consecutive triangulation in this sequence are either identical or can be obtained from one another by a flip. More precisely, if the flip that transforms $T_{i-1}$ into $T_i$ is incident to $\{p,q\}$, then the deletion of vertex $p$ sends these two triangulations to the same triangulation. Indeed, in this case, the quadrilateral affected by this flip is shrunk to a triangle by the deletion. If the flip that transforms $T_{i-1}$ into $T_i$ is not incident to $\{p,q\}$, then the quadrilateral affected by a flip remains a quadrilateral after the deletion and $T_{i-1}\contract{p}$ and $T_i\contract{p}$ are still related by a flip.

Therefore, removing unnecessary triangulations from the sequence of triangulations $T_0\contract{p}$, ..., $T_k\contract{p}$, one obtains a path of length $k'$ between $T^-\contract{p}$ and $T^+\contract{p}$. As the number of triangulations that have been removed in this process is also the number $f$ of flips incident to $\{p,q\}$ along path $(T_i)_{0\leq{i}\leq{k}}$, one obtains $k'=k-f$. By definition, $k'$ is at least $\delta(P\contract{p})$. As, in addition, $k$ is equal to $\delta(P)$, then the desired result holds. \qed
\end{proof}

Under some conditions on two triangulations, some boundary edge must be incident to at least two flip along any geodesic between then. For instance, the following lemma in a consequence of Lemma \ref{Clemma.1}:

\begin{lemma}\label{Clemma.2}
Let $P$ be a pair of centrally symmetric triangulations $T^-$ and $T^+$ of a convex polygon. Further consider two clockwise oriented boundary edges $(p_0,p_1)$ and $(p_1,p_2)$ on this polygon. If the triangles of $T^-$ incident to $\{p_0,p_1\}$ and to $\{p_1,p_2\}$ do not share an edge, and if $T^+$ has an ear in $p_1$, then there exists a vertex $x\in\{p_0,p_1\}$ so that
$$
\delta(P)\geq\delta(P\contract{x})+2\mbox{.}
$$
\end{lemma}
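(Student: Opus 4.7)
The plan is to fix any geodesic from $T^-$ to $T^+$, count the flips along it that are incident to the two boundary edges $\{p_0,p_1\}$ and $\{p_1,p_2\}$ (call these counts $f_{01}$ and $f_{12}$), and show that at least one of them is $\geq 2$. The conclusion will then follow from Lemma \ref{Clemma.1} applied with $x=p_0$ when $f_{01}\geq 2$ and with $x=p_1$ when $f_{12}\geq 2$.

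I would first denote by $r$ and $s$ the apices in $T^-$ of the triangles incident to $\{p_0,p_1\}$ and $\{p_1,p_2\}$ respectively. The assumption that these two triangles do not share an edge forces $r\neq s$, together with $r\neq p_2$ and $s\neq p_0$. Since $T^+$ has an ear at $p_1$, its triangle incident to either boundary edge is $\{p_0,p_1,p_2\}$, so the edge $\{p_0,p_2\}$ belongs to $T^+$; by the previous sentence, it does not belong to $T^-$. I would then single out the last flip $F^*$ along the fixed geodesic that introduces $\{p_0,p_2\}$. From $F^*$ onwards, this edge remains in every subsequent triangulation, so the triangle $\{p_0,p_1,p_2\}$ persists and no flip after $F^*$ is incident to $\{p_0,p_1\}$ or $\{p_1,p_2\}$.

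The key step is a structural analysis of $F^*$. Its quadrilateral has $\{p_0,p_2\}$ as the new diagonal, and since $p_0,p_1,p_2$ are consecutive on the boundary of $\pi$, the vertex $p_1$ is the only polygon vertex on the $p_1$-side of the chord $\{p_0,p_2\}$. Hence this quadrilateral has vertex set $\{p_0,p_1,p_2,u\}$ for some fourth vertex $u$, and its old diagonal is $\{p_1,u\}$. The two triangles of the quadrilateral just before $F^*$ are therefore $\{p_0,p_1,u\}$ and $\{p_1,p_2,u\}$, so at that moment the apex at $\{p_0,p_1\}$ and the apex at $\{p_1,p_2\}$ are both equal to $u$, and $F^*$ is incident to both boundary edges.

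To conclude, I would use that each flip incident to a given boundary edge changes the apex of the triangle incident to it. Before $F^*$, the apex at $\{p_0,p_1\}$ must evolve from $r$ (its value in $T^-$) to $u$, so if $u\neq r$ at least one flip incident to $\{p_0,p_1\}$ must occur strictly before $F^*$, which together with $F^*$ itself yields $f_{01}\geq 2$. The symmetric argument shows that $u\neq s$ yields $f_{12}\geq 2$. Since $r\neq s$, the equalities $u=r$ and $u=s$ cannot both hold, and one of $f_{01}\geq 2$ or $f_{12}\geq 2$ is always satisfied. The delicate point of this plan is the rigidity of the quadrilateral underlying $F^*$, which is where the consecutivity of $p_0,p_1,p_2$ on the polygon boundary is essential.
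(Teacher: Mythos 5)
Your proof is correct and rests on the same mechanism as the paper's: the flip that creates the ear at $p_1$ acts in the quadrilateral $\{p_0,p_1,p_2,u\}$ and is therefore incident to both boundary edges, while the hypothesis that the triangles of $T^-$ at $\{p_0,p_1\}$ and $\{p_1,p_2\}$ share no edge forces at least one additional incident flip on one of the two sides, after which Lemma \ref{Clemma.1} finishes the argument. The only difference is bookkeeping: the paper splits on whether at least two flips are incident to $\{p_0,p_1\}$ and, in the remaining case, identifies the unique such flip as the ear-creating one, whereas you locate that flip directly as the last introduction of $\{p_0,p_2\}$ and conclude symmetrically from $r\neq s$.
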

\begin{proof}
Assume that the triangles of $T^-$ incident to $\{p_0,p_1\}$ and to $\{p_1,p_2\}$ do not share an edge, and that $T^+$ has an ear in $p_1$. Consider a geodesic $(T_i)_{0\leq{i}\leq{k}}$ from $T^-$ to $T^+$. If there are at least $2$ flips incident to $\{p_0,p_1\}$ along this geodesic then, according to Lemma \ref{Clemma.1}, 
$$
\delta(P)\geq\delta(P\contract{p_0})+2\mbox{.}
$$

Hence the desired result holds with $x=p_0$. Assume that there is at most one flip incident to $\{p_0,p_1\}$ along $(T_i)_{0\leq{i}\leq{k}}$. In this case, there must be exactly one such flip. Indeed, by hypothesis, the triangle of $T^+$ incident to $\{p_0,p_1\}$ is the ear at vertex $p_1$.
\begin{figure}
\begin{centering}
\includegraphics{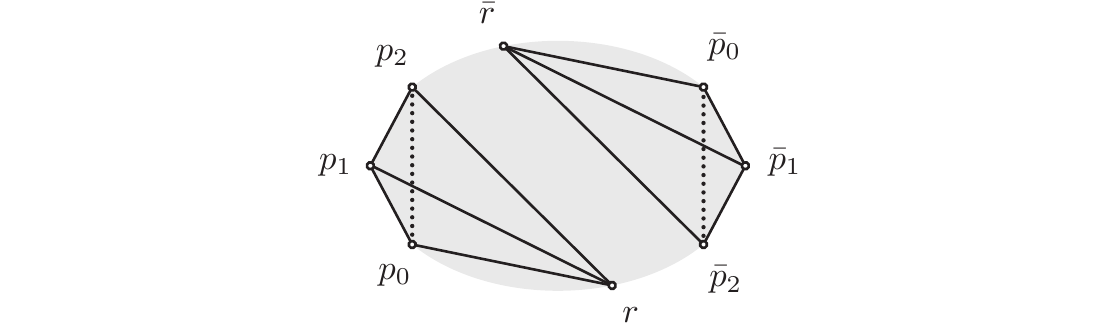}
\caption{The $j$-th flip along path $(T_i)_{0\leq{i}\leq{k}}$ used in the proof of Lemma \ref{Clemma.2}.}\label{Cfigure.Lem2}
\end{centering}
\end{figure}
Since the triangles of $T^-$ incident to $\{p_0,p_1\}$ and to $\{p_1,p_2\}$ do not share an edge, they must be distinct from the ear at vertex $p_1$. Therefore, some flip along $(T_i)_{0\leq{i}\leq{k}}$ must be incident to $\{p_0,p_1\}$.

Assume that the flip incident to $\{p_0,p_1\}$ along $(T_i)_{0\leq{i}\leq{k}}$ is the $j$-th one. This flip must then replace $\{p_1,r\}$ by $\{p_0,p_2\}$ as shown in Fig. \ref{Cfigure.Lem2}, where $r$ is the vertex of $\pi$ so that $\{p_0,r\}$ and $\{p_1,r\}$ belong to $T^-$. Observe that this flip is incident to $\{p_1,p_2\}$. Moreover, as the triangles of $T_{j-1}$ incident to $\{p_0,p_1\}$ and $\{p_1,p_2\}$ have a common edge, at least one of the first $j-1$ flips along $(T_i)_{0\leq{i}\leq{k}}$ must be incident to $\{p_1,p_2\}$. Hence, at least $2$ flips along $(T_i)_{0\leq{i}\leq{k}}$ are incident to this edge, and according to Lemma \ref{Clemma.1}, 
$$
\delta(P)\geq\delta(P\contract{p_1})+2\mbox{.}
$$

As a consequence, the desired result holds with $x=p_1$. \qed
\end{proof}

The previous lemma can be generalized to sequences of deletions:

\begin{lemma}\label{Clemma.3}
Let $P$ be a pair of centrally symmetric triangulations $T^-$ and $T^+$ of a convex polygon. Further consider $n\geq2$ clockwise oriented boundary edges $(p_0,p_1)$, ..., $(p_{n-1},p_n)$ on this polygon. If the triangles of $T^-$ incident to $\{p_0,p_1\}$, ..., $\{p_{n-1},p_n\}$ do not have common edges, and if $\{p_0,p_n\}$ is an edge of $T^+$, then a pair $Q$ of centrally symmetric triangulations, obtained from $P$ by deleting all but one of the vertices $p_0$, ..., $p_{n-1}$ satisfies
$$
\delta(P)\geq\delta(Q)+2(n-1)\mbox{.}
$$
\end{lemma}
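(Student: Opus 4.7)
I would prove the lemma by induction on $n$, with Lemma \ref{Clemma.2} providing both the base case and the core reduction step. For $n=2$, the hypothesis $\{p_0,p_2\}\in T^+$ combined with the fact that $\{p_0,p_1\}$ and $\{p_1,p_2\}$ are boundary edges of the polygon forces the triangle $\{p_0,p_1,p_2\}$ to sit in $T^+$, i.e.\ $T^+$ has an ear at $p_1$. Lemma \ref{Clemma.2} then yields the required pair $Q=P\contract{x}$.

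For the inductive step with $n\geq 3$, the first task is to locate an interior ear of $T^+$. Since $\{p_0,p_n\}\in T^+$, the sub-polygon with vertices $p_0,\ldots,p_n$ is triangulated by the edges of $T^+$ lying inside it, and any polygon triangulation has at least two ears. An ear at $p_0$ of this sub-polygon would require the diagonal $\{p_1,p_n\}$ of $\pi$ to belong to $T^+$, and an ear at $p_n$ would require $\{p_0,p_{n-1}\}$ to belong to $T^+$; since these two diagonals of $\pi$ cross whenever $n\geq 3$, at most one of them lies in $T^+$, so at least one ear of the sub-polygon triangulation is located at some $p_i$ with $1\leq i\leq n-1$, meaning $\{p_{i-1},p_{i+1}\}\in T^+$.

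For this index $i$, I would apply Lemma \ref{Clemma.2} to the two consecutive clockwise oriented boundary edges $(p_{i-1},p_i)$ and $(p_i,p_{i+1})$: its non-sharing hypothesis is part of the non-sharing hypothesis of the present lemma, and the ear at $p_i$ was just established. The lemma yields a vertex $x\in\{p_{i-1},p_i\}$ with $\delta(P)\geq\delta(P\contract{x})+2$, and the plan is to invoke the inductive hypothesis on $P\contract{x}$ with the $n-1$ clockwise oriented boundary edges obtained from the original list by replacing the pair adjacent to $x$ with their merged edge. The analog of $\{p_0,p_n\}$ belongs to $(T^+)\contract{x}$ automatically, since $x\in\{p_{i-1},p_i\}\subseteq\{p_0,\ldots,p_{n-1}\}$ ensures that $\{p_0,p_n\}$ either remains intact or becomes $\{p_1,p_n\}$, which is the appropriate closing edge in the reduced sequence.

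The main obstacle is verifying that the triangles of $(T^-)\contract{x}$ incident to the new boundary edges still do not pairwise share edges. Only one triangle on the list genuinely changes: among $T_{i-1}$, $T_i$, $T_{i+1}$, the one at the collapsed boundary edge disappears while its neighbor on the other side has a single vertex renamed. A short case analysis shows that every conceivable new coincidence of edges after the renaming forces two of the triangles $T_{i-1}$, $T_i$, $T_{i+1}$ to have already shared an edge in $T^-$, contradicting the original hypothesis. Granting this, the inductive hypothesis yields a pair $Q$ obtained from $P\contract{x}$ by $n-2$ further deletions, with $\delta(P\contract{x})\geq\delta(Q)+2(n-2)$; combining this with $\delta(P)\geq\delta(P\contract{x})+2$ produces $\delta(P)\geq\delta(Q)+2(n-1)$, and $Q$ is obtained from $P$ by deleting exactly $n-1$ of the vertices $p_0,\ldots,p_{n-1}$, completing the induction.
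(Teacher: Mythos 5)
Your proposal is correct and follows essentially the same route as the paper: induction on $n$, with Lemma \ref{Clemma.2} handling the base case $n=2$ and, after locating an ear of the induced triangulation of the sub-polygon $p_0,\ldots,p_n$ at some interior vertex $p_i$, supplying the vertex $x\in\{p_{i-1},p_i\}$ whose deletion gains $2$ before the inductive hypothesis is applied to $P\contract{x}$. If anything, you are slightly more explicit than the paper on two points it leaves implicit — why the ear cannot sit at both endpoints $p_0$ and $p_n$, and why the non-sharing hypothesis survives the relabeling — though, like the paper, you defer the latter verification to an unwritten case analysis.
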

\begin{proof}
Assume that the triangles of $T^-$ incident to $\{p_0,p_1\}$, ..., $\{p_{n-1},p_n\}$ do not have common edges, and that $T^+$ contains $\{p_0,p_n\}$. The lemma will be proven by induction on $n$. First observe that if $n=2$, then the result immediately follows from Lemma \ref{Clemma.2}.

Now assume that $n>2$. As $\{p_0,p_n\}$ is an edge of $T^+$, this triangulation induces a triangulation $U$ of the polygon with vertices $p_0$ to $p_n$. Any triangulation of a polygon with at least four vertices has at least two ears. Hence, $U$ has at least two ears and at least one of them is an ear at some vertex $p_i$ so that $0<i<n$. By assumption, the triangles of $T^-$ incident to $\{p_{i-1},p_i\}$ and to $\{p_i,p_{i+1}\}$ do not share an edge. Therefore, Lemma \ref{Clemma.2} provides some $x\in\{p_{i-1},p_i\}$ so that the following inequality holds:
\begin{equation}\label{Clemma.3.eq.1}
\delta(P)\geq\delta(P\contract{x})+2\mbox{.}
\end{equation}

Consider the vertices $p'_0$, ..., $p'_{n-1}$ obtained by removing $x$ from $p_0$, ..., $p_n$, and by relabeling the resulting sequence in such a way that the order of the indices is preserved. By construction, the triangles of $T^-\contract{x}$ incident to $\{p'_0,p'_1\}$, ..., $\{p'_{n-1},p'_n\}$ do not have common edges, and $\{p'_0,p'_n\}$ belongs to $T^+\contract{x}$. Therefore, by induction, some pair $Q$ of centrally symmetric triangulations, obtained from $P\contract{x}$ by deleting all but one of the vertices $p'_0$, ..., $p'_{n-2}$ satisfies the following inequality:
\begin{equation}\label{Clemma.3.eq.2}
\delta(P\contract{x})\geq\delta(Q)+2(n-2)\mbox{.}
\end{equation}

The result is then obtained combining (\ref{Clemma.3.eq.1}) with (\ref{Clemma.3.eq.2}). \qed
\end{proof}

\section{Proof of the main inequality}
\label{Csection.3}

Theorem \ref{Ctheorem.1} is proven in this section using the techniques introduced in the previous section. As in \cite{Pournin2014}, different sequences of deletions will be performed within $(a,b,c,d)$-pairs to obtain recursive lower bounds on their distance. In particular, the broad family of $(a,b,c,d)$-pairs needs be stable under each of these sequences of deletions. For instance:

\begin{lemma}\label{Clemma.X1}
Consider an $(a,b,c,d)$-pair $A$. Assume that at least $3$ flips are incident to $\{0,1\}$ along some geodesic between $A^-$ and $A^+$. If $a+b/2+2<d$, then there exists an $(a,b-2,b-1,d-2)$-pair $B$ so that
$$
\delta(A)\geq\delta(B)+5\mbox{.}
$$
\end{lemma}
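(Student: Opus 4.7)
The plan is to obtain $B$ by performing two successive vertex deletions on $A$, collecting $3$ units of distance credit from the first and $2$ from the second. First, the hypothesis provides a geodesic from $A^-$ to $A^+$ along which at least $3$ flips are incident to the boundary edge $\{0,1\}$, so Lemma \ref{Clemma.1} applied to the clockwise oriented boundary edge $(0,1)$ gives
$$\delta(A)\geq\delta(A\contract 0)+3.$$

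Next I would examine the intermediate pair $A\contract 0$, which lives on a polygon with $2d$ vertices. In $A^-\contract 0$ the two combs of $A^-$ at vertices $0$ and $1$ coalesce into a single comb with $d-b+2$ teeth at the image of vertex $1$; the first two edges $\{2,b\}$ and $\{2,b-1\}$ of the zigzag of $A^-$ remain and form a two-tooth comb at the image of vertex $2$; the rest of the zigzag, the ear of $A^-$, and the centrally symmetric structure below survive intact. In $A^+\contract 0$ the ears at $0$ and $\bar 0$ collapse, while the combs at $\bar c,\ldots,\bar d$ and the central zigzag are preserved. Up to relabeling, $A\contract 0$ thus has the combinatorial shape of an $(a,b-2,b-1,d-1)$-pair.

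To extract the remaining two flips and lower $d$ by one more, I would apply Lemma \ref{Clemma.2} inside $A\contract 0$. In the smaller polygon the counterclockwise neighbour of vertex $1$ is now $\bar d$, and a direct inspection shows that the triangles of $A^-\contract 0$ incident to the two consecutive clockwise oriented boundary edges $(\bar d,1)$ and $(1,2)$ are $\{1,\bar 1,\bar d\}$ and $\{1,2,b\}$, which share only vertex $1$. Arranging the distribution of teeth among the combs of $A^+$ so that $\{2,\bar d\}$ is a tooth of the comb at $\bar d$ makes the image of vertex $1$ an ear of $A^+\contract 0$; Lemma \ref{Clemma.2} then yields a vertex $y\in\{\bar d,1\}$ with $\delta(A\contract 0)\geq\delta((A\contract 0)\contract y)+2$. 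Chaining the two bounds gives $\delta(A)\geq\delta(B)+5$ for $B=(A\contract 0)\contract y$.

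The main obstacle will be the verification that $B$ is genuinely an $(a,b-2,b-1,d-2)$-pair. On the $A^-$ side this is a routine bookkeeping of one additional deletion inside the already identified intermediate $(a,b-2,b-1,d-1)$-structure, yielding combs with $d-b+1$ and $2$ teeth, a zigzag starting at $b-2$ and crossing $\{k-1,\overline{k-1}\}$, and an ear at $k-1$. The delicate piece is ensuring that the first parameter $a$ is preserved on the $A^+$ side: the interior teeth of the combs of $B^+$ at $\overline{b-1},\ldots,\overline{d-2}$ must redistribute to match the value of $a$. The strengthened hypothesis $a+b/2+2<d$, equivalent to $l<k-1$ rather than just $l<k$, is exactly what provides the slack needed for such a redistribution to be compatible with the two-teeth-per-comb minimum required of every comb of $B^+$.
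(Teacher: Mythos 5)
Your first step matches the paper's: Lemma \ref{Clemma.1} applied to the oriented edge $(0,1)$ gives $\delta(A)\geq\delta(A\contract{0})+3$, and your description of $A\contract{0}$ (the combs of $A^-$ at $0$ and $1$ merging into one comb with $d-b+2$ teeth at vertex $1$) is correct. The gap is in the second deletion. You apply Lemma \ref{Clemma.2} to the edges $(\bar{d},1)$ and $(1,2)$ and must then accept whichever vertex $y\in\{\bar{d},1\}$ the lemma returns; neither choice produces an $(a,b-2,b-1,d-2)$-pair in general. If $y=1$, the merged comb of $A^-\contract{0}$ is pushed onto vertex $2$, where it absorbs the first zigzag edge $\{2,b\}$: the result has a single comb containing the diagonal and no separate two-tooth comb at the next vertex, so the constraint $b'<c'$ from (\ref{Cequation.0.1}) fails and you leave the template. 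If $y=\bar{d}$, the $A^-$ side is fine, but on the $A^+$ side the deletion sends the teeth $\{3,\bar{d}\},\dots,\{t,\bar{d}\}$ of the comb at $\bar{d}$ to $\{1,3\},\dots,\{1,t\}$, i.e.\ it plants a comb at the future vertex $0$ in place of the required ear whenever $t>2$. You cannot repair this by ``arranging the distribution of teeth among the combs of $A^+$'': the pair $A$, hence $t$, is given, and the lemma must hold for every admissible distribution. Finally, the preservation of the parameter $a$ is precisely what must be verified by counting interior teeth before and after each deletion; invoking a ``redistribution'' permitted by $a+b/2+2<d$ is not an argument — that hypothesis is only needed so that the quadruple $(a,b-2,b-1,d-2)$ satisfies (\ref{Cequation.0.1})--(\ref{Cequation.0.2}).

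The paper takes a different route for the second step and does not use Lemma \ref{Clemma.2} at all. It shows directly that at least two flips are incident to $\{d-1,d\}$ along any geodesic between $A^-\contract{0}$ and $A^+\contract{0}$: a single such flip would replace $\{1,d\}$ by $\{d-1,\bar{t}\}$ inside a triangulation containing $\{1,\bar{t}\}$, whose symmetric copy $\{\bar{1},t\}$ crosses $\{1,d\}$, a contradiction with central symmetry. Lemma \ref{Clemma.1} then yields $\delta(A\contract{0})\geq\delta(B)+2$ with $B=(A\contract{0})\contract{(d-1)}$, and this specific deletion removes one interior tooth from the merged comb on the $A^-$ side while merging the combs at $d-1$ and $d$ on the $A^+$ side, so the interior-tooth count, hence $a$, is preserved and $B$ is genuinely an $(a,b-2,b-1,d-2)$-pair. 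You would need to replace your application of Lemma \ref{Clemma.2} by an argument of this kind localized at the other end of the comb.
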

\begin{proof}
Consider triangulation $A^-\contract{0}$, depicted on the left of Fig. \ref{Cfigure.2} and note that the deletion of vertex $0$ has merged the combs of $A^-$ at vertices $0$ and $1$.
\begin{figure}[b]
\begin{centering}
\includegraphics{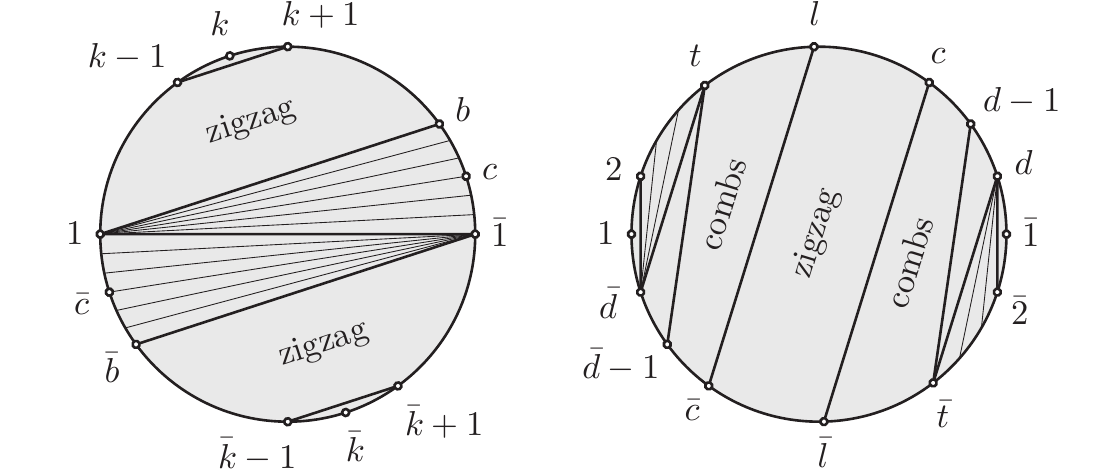}
\caption{Sketch of triangulations $A^-\contract{0}$ (left) and $A^+\contract{0}$ (right), where $k$ and $l$ are respectively equal to $\lfloor{b/2}\rfloor+1$ and $a+b-c+4$. For the sake of clarity, the placement of the vertices slightly differs in the two triangulations.}\label{Cfigure.2}
\end{centering}
\end{figure}
Further observe that, after the deletion, vertices $1$ and $\bar{1}$ immediately follow vertices $\bar{d}$ and $d$ clockwise. Now consider triangulation $A^+\contract{0}$, sketched on the right of Fig. \ref{Cfigure.2}. In this sketch, $t$ is the number of teeth of the comb at vertex $d$ in $A^+$. Observe that deleting vertex $0$ from $A^+$ only affects this comb and the symmetric comb at vertex $\bar{d}$. More precisely, $A^+\contract{0}$ has symmetric combs with $t-1$ teeth at vertices $d$ and $\bar{d}$ when $t>2$, and these combs shrink to single edges when $t=2$. According to Lemma \ref{Clemma.1}, 
\begin{equation}\label{ClemmaC1.equation.1}
\delta(A)\geq\delta(A\contract{0})+3\mbox{.}
\end{equation}

Observe that there must be at least $2$ flips incident to edge $\{d-1,d\}$ along any geodesic from $A^-\contract{0}$ to $A^+\contract{0}$. Indeed, otherwise the single such flip would replace $\{1,d\}$ by $\{d-1,\bar{t}\}$. In particular, the triangulation within which this flip is performed would contain both $\{1,d\}$ and $\{1,\bar{t}\}$. By symmetry, it would also contain edge $\{\bar{1},t\}$, which is impossible because this edge crosses $\{1,d\}$. Denote by $B$ the pair of triangulations obtained by deleting vertex $d-1$ from $A\contract{0}$, and by relabeling the vertices of the two triangulations in the resulting pair clockwise from $0$ to $2d-3$ in such a way that vertex $1$ is relabeled $0$. Since at least $2$ flips are incident to edge $\{d-1,d\}$ along any geodesic between $A^-\contract{0}$ and $A^+\contract{0}$, then according to Lemma \ref{Clemma.1},
\begin{equation}\label{ClemmaC1.equation.2}
\delta(A\contract{0})\geq\delta(B)+2\mbox{.}
\end{equation}

Finally, assume that $a+b/2+2$ is less than $d$. Under this assumption, $B$ is an $(a,b-2,b-1,d-2)$-pair. In particular, quadruple $(a,b-2,b-1,d-2)$ satisfies inequalities (\ref{Cequation.0.1}), (\ref{Cequation.0.1.5}), and (\ref{Cequation.0.2}) precisely because $a+b/2+2<d$. Moreover, the successive deletions of vertices  $0$ and $d-1$ do not change the total number of interior teeth of $A^-$ and $A^+$. Indeed, deleting vertex $0$ from $A^-$ creates a new interior tooth as this deletion merges the combs at vertices $0$ and $1$. However, the deletion of vertex $d-1$ subsequently removes an interior tooth from the merged comb. On the other hand, when $t=2$, these deletions do not remove from or add interior teeth to $A^+$, and when $t>2$, the first one removes an interior tooth from the comb at vertex $d$ in $A^+$, while the second one creates a new interior tooth as it merges the combs at vertices $d-1$ and $d$.

Therefore, combining (\ref{ClemmaC1.equation.1}) and (\ref{ClemmaC1.equation.2}) completes the proof.\qed
\end{proof}

Observe that Lemma \ref{Clemma.X1} requires the existence of a particular path between the two triangulations in an $(a,b,c,d)$-pair, which is a rather strong condition. Other conditions will be investigated in order to exhaust all possibilities. The following lemma deals with the case when $c$ is equal to $d$:

\begin{lemma}\label{Clemma.X3}
Consider an $(a,b,c,d)$-pair $A$ and call $t$ the number of interior edges of $A^-$ incident to vertex $d$. If $a+b/2+2<d$ and if $c=d$, then there exists an $(a,b-2t+2,b-2t+3,d-t)$-pair $B$ so that
$$
\delta(A)\geq\delta(B)+2t-1\mbox{.}
$$
\end{lemma}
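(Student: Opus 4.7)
The plan is to combine two successive vertex deletions that reduce $A$ to an $(a,b-2,b-1,d-2)$-pair $B$, each deletion contributing enough to yield $\delta(A)\geq\delta(B)+3=\delta(B)+2t-1$.

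I would begin by extracting the structural consequences of $c=d$. The comb of $A^-$ at vertex $0$ then has only its two extreme teeth $\{0,d\}$ and $\{0,\bar{0}\}$, while the comb at vertex $1$ carries the teeth $\{1,b\},\{1,b+1\},\ldots,\{1,d\}$; hence vertex $d$ is incident to exactly $t=2$ interior edges of $A^-$, namely $\{0,d\}$ and $\{1,d\}$. The triangle of $A^-$ at the boundary edge $\{d,d+1\}$ is $\{0,d,d+1\}$ and, by central symmetry from the triangle $\{0,1,d\}$ at $\{0,1\}$, the triangle at $\{d+1,d+2\}$ is $\{\bar{d},d+1,d+2\}$; these two triangles share only the vertex $d+1$ and no edges.

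Next, since $A^+$ has an ear at $\bar{0}=d+1$ by central symmetry of its ear at $0$, Lemma \ref{Clemma.2} applied with $p_1=d+1$ provides some $x\in\{d,d+1\}$ satisfying $\delta(A)\geq\delta(A\contract x)+2$. I would then analyze $A\contract x$ in each of the two possible cases and adapt the contradiction argument at the end of the proof of Lemma \ref{Clemma.X1}: a single flip along a suitable boundary edge would force two edges to cross in an intermediate triangulation, which allows me to conclude that at least one flip along any geodesic between $A^-\contract x$ and $A^+\contract x$ is incident to that boundary edge. A second invocation of Lemma \ref{Clemma.1} then produces a pair $B$ of centrally symmetric triangulations with $\delta(A\contract x)\geq\delta(B)+1$, after a clockwise relabeling of the surviving vertices from $0$ to $2d-3$.

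The main obstacle lies in verifying that $B$ is indeed an $(a,b-2,b-1,d-2)$-pair. The two deletions cause the combs of $A^-$ at $0$ and $1$ to merge into a single long comb of $B^-$ carrying $d-b+1$ teeth, which is exactly the target number $d'-c'+2=(d-2)-(b-1)+2$, and a symmetric phenomenon occurs on the opposite side. The teeth lost by the combs of $A^+$ are compensated by those created in the mergers, so the parameter $a$ is preserved. A routine check confirms that the hypothesis $a+b/2+2<d$ is exactly what the quadruple $(a,b-2,b-1,d-2)$ needs in order to satisfy inequalities (\ref{Cequation.0.1}), (\ref{Cequation.0.1.5}), and (\ref{Cequation.0.2}). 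Chaining the two distance inequalities finally yields $\delta(A)\geq\delta(B)+3=\delta(B)+2t-1$, as required.
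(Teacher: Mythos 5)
Your computation that $A^-$ has exactly two interior edges at vertex $d$ when $c=d$ (namely $\{0,d\}$ and $\{1,d\}$) is correct, but it should have been a warning sign rather than a simplification: if $t$ were identically $2$, the lemma would have no reason to be parametrized by $t$ at all, and, more importantly, the invocation of this lemma in the proof of Theorem \ref{Ctheorem.1} explicitly takes $t$ to be the number of interior edges of $A^+$ incident to vertex $d$ and uses $t=a+b-c+2$ to recover the desired bound (with $t=2$ the case $c=d$ of Theorem \ref{Ctheorem.1} would only follow when $d=a+b$). The statement's ``$A^-$'' is a misprint for ``$A^+$'': the relevant quantity is the number of teeth of the comb at vertex $d$ in $A^+$, which equals $a+b-d+2$ and is in general strictly larger than $2$. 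Your argument therefore only proves the case $t=2$ and misses the heart of the lemma. For $t>2$ one must delete $t$ vertices in total, and the paper does this by first showing that two flips are forced on $\{\bar{0},d\}$ (a single such flip would have to replace the diagonal $\{0,\bar{0}\}$ by $\{\bar{1},d\}$, which is not a diagonal), giving $\delta(A)\geq\delta(A\contract{d})+2$, and then applying Lemma \ref{Clemma.3} to the chain of boundary edges $\{0,1\},\dots,\{t-2,t-1\}$ of $A\contract{d}$ (which contributes $2(t-2)$ flips), followed by one last forced flip on $\{r,t-1\}$, where $r$ is the surviving vertex.

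Even restricted to $t=2$, your opening step is weaker than it needs to be: applying Lemma \ref{Clemma.2} at the ear of $A^+$ at $\bar{0}$ only yields an unspecified $x\in\{d,\bar{0}\}$, and the two contractions $A\contract{d}$ and $A\contract{\bar{0}}$ are structurally different, so the ``adapt the contradiction argument'' step would have to be carried out separately in each case; you do not do this, and the bookkeeping showing that the resulting pair is an $(a,b-2,b-1,d-2)$-pair is asserted rather than checked for $x=\bar{0}$. The paper's diagonal argument pins down the deleted vertex to $d$ from the start and avoids this branching.
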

\begin{proof}
At least two flips are incident to $\{\bar{0},d\}$ along any geodesic between $A^-$ and $A^+$. Indeed, otherwise the single such flip would replace the diagonal $\{0,\bar{0}\}$ of $A^-$ by edge $\{\bar{1},d\}$. However, since $d>1$, the latter edge cannot be a diagonal. Hence, according to Lemma \ref{Clemma.1},
\begin{equation}\label{Clemma.7.eq1}
\delta(A)\geq\delta(A\contract{d})+2\mbox{.}
\end{equation}

Assume that $c$ is equal to $d$. Triangulations $A^-\contract{d}$ and $A^+\contract{d}$ are depicted in Fig. \ref{Cfigure.4} in this case.
\begin{figure}[b]
\begin{centering}
\includegraphics{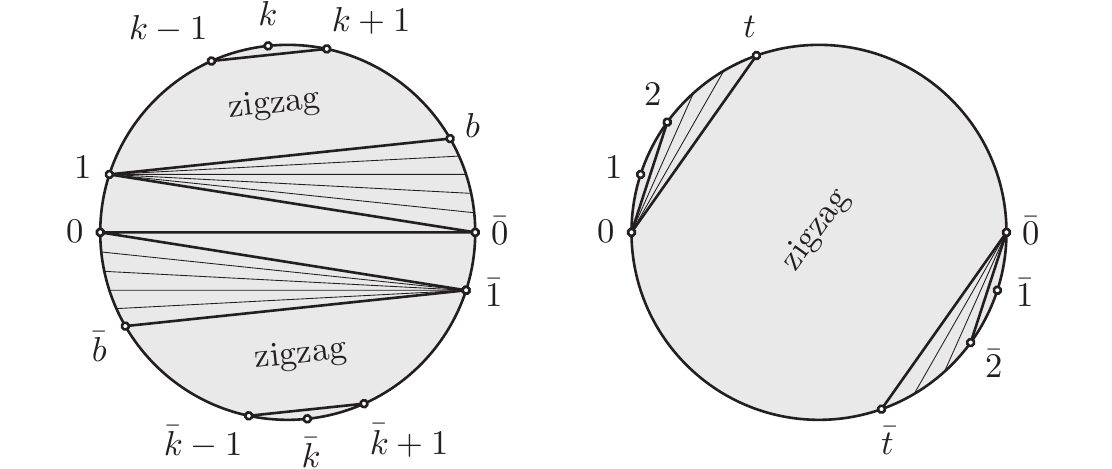}
\caption{Sketch of triangulations $A^-\contract{d}$ (left) and $A^+\contract{d}$ (right) when $c=d$, where $k$ is equal to $\lfloor{b/2}\rfloor+1$. Note that $t=a+b-c+4$ in this case. For the sake of clarity, the placement of the vertices slightly differs in the two triangulations.}\label{Cfigure.4}
\end{centering}
\end{figure}
Further assume that $a+b/2+2<d$. Consider the triangulations $B^-$ and $B^+$ obtained by deleting vertices $0$ to $t-2$ from $A^-\contract{d}$ and from $A^+\contract{d}$, and by relabeling the vertices of the resulting triangulations clockwise from $0$ to $2d-2t+1$ in such a way that vertices $t-1$ and $t$ are respectively relabeled $0$ and $1$. Denote by $B$ the pair $\{B^-,B^+\}$. One can see on Fig. \ref{Cfigure.4} that $B$ is an $(a,b-2t+2,b-2t+3,d-t)$-pair. Note, in particular, that the quadruple $(a,b-2t+2,b-2t+3,d-t)$ satisfies inequalities (\ref{Cequation.0.1}), (\ref{Cequation.0.1.5}), and (\ref{Cequation.0.2}) because $a+b/2+2<d$. Moreover, deleting vertex $d$ and vertices $0$ to $t-2$ does not change the total number of interior teeth of triangulations $A^-$ and $A^+$. Indeed, when these deletions are carried out within $A^-$, they each create a new interior teeth, except for the deletions of vertices  $d$ and $0$. When they are carried out within $A^+$, they each remove an interior teeth, except for the deletions of vertices $t-3$ and $t-2$. 

The remainder of the proof consists in showing that
\begin{equation}\label{Clemma.7.eq2}
\delta(A\contract{d})\geq\delta(B)+2t-3\mbox{.}
\end{equation}

The desired result is then obtained by combining inequalities (\ref{Clemma.7.eq1}) and (\ref{Clemma.7.eq2}). First assume that $t=2$. In this case $B^-$ and $B^+$ are obtained by deleting vertex $0$ from $A^-\contract{d}$ and from $A^+\contract{d}$ and by relabeling the vertices of the resulting triangulations. Observe that edge $\{0,1\}$ is not incident to the same triangle in $A^-\contract{d}$ and in $A^+\contract{d}$. Hence, at least one flip is incident to this edge along any geodesic between these triangulations, and according to Lemma \ref{Clemma.1}, the distance of $B^-$ and $B^+$ is less by at least one than that of $A^-\contract{d}$ and in $A^+\contract{d}$, which proves inequality (\ref{Clemma.7.eq2}) in this case.

Now assume that $t>2$. In this case, pair $A\contract{d}$ satisfies the requirements of Lemma \ref{Clemma.3} with $p_i=i$ and $n=t-1$. Therefore, the pair $C$ of the triangulations $C^-$ and $C^+$ obtained from $A^-\contract{d}$ and $A^+\contract{d}$ by deleting vertices $0$, ..., $t-2$ except, say, vertex $r$, satisfies
\begin{equation}\label{Clemma.7.eq3}
\delta(A\contract{d})\geq\delta(C)+2(t-2)\mbox{.}
\end{equation}

Observe that $\{r,t-1\}$ is a boundary edge of $C^-$ and $C^+$ that is not incident to the same triangle in these two triangulations. More precisely, the triangle incident to $\{r,t-1\}$ in $C^-$ has a vertex greater than $t$, while its counterpart in $C^+$ has vertices $r$, $t-1$, and $t$. Hence, at least one flip is incident to $\{r,t-1\}$ along any geodesic between $C^-$ and $C^+$. By construction, $B^-$ and $B^+$ are obtained by deleting vertex $r$ from $C^-$ and from $C^+$, and by relabeling the vertices of the resulting triangulations. As a consequence, by Lemma \ref{Clemma.1},
\begin{equation}\label{Clemma.7.eq4}
\delta(C)\geq\delta(B)+1\mbox{.}
\end{equation}

Combining (\ref{Clemma.7.eq3}) with (\ref{Clemma.7.eq4}) therefore proves inequality (\ref{Clemma.7.eq2}). \qed
\end{proof}

The requirements of the next theorem (Theorem \ref{Ctheorem.2} below) are complementary to those of the last two lemma. In the proof of this theorem, the following lemma will instrumental. In particular, it will be invoked twice.

\begin{lemma}\label{Csection.3.lemma.1}
Consider an $(a,b,c,d)$-pair $A$. Let $T$ be some triangulation along a geodesic between $A^-$ and $A^+$. If $T$ has an interior edge whose two vertices belong to $\{c, ..., d, \bar{0}\}$, then there exists a vertex $x$ so that $c\leq{x}\leq{d}$ and
$$
\delta(A)\geq\delta(A\contract{x})+3\mbox{.}
$$
\end{lemma}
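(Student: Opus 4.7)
First, I would fix a geodesic $(T_0,\ldots,T_k)$ from $A^-$ to $A^+$ with $T=T_j$ for some $j$. Among all interior edges of $T$ whose two vertices belong to $\{c,c+1,\ldots,d,\bar{0}\}$, I would pick one, say $\{u,v\}$, that minimizes $v-u$. Any interior chord of the sub-polygon with vertices $u,u+1,\ldots,v$ used by $T$ would again be an interior edge of $T$ whose two endpoints lie in the arc and whose span is strictly smaller than $v-u$; minimality thus forces $v-u=2$. Setting $x:=u+1$, we have $c+1\leq x\leq d$, and $x$ is an ear of $T$ with ear edge $\{x-1,x+1\}$.

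The plan is to exhibit three distinct flips along this geodesic that are all incident to a single boundary edge at $x$ or at $x-1$, and then apply Lemma~\ref{Clemma.1}. Since $\{x-1,x+1\}$ lies in neither $A^-$ nor $A^+$, some flip in $(T_0,\ldots,T_j)$ must create it (the ear-creation flip) and some flip in $(T_j,\ldots,T_k)$ must destroy it (the ear-destruction flip). Both are performed in quadrilaterals of the form $(x-1,x,x+1,w)$ whose only two sides at $x$ are the boundary edges $\{x-1,x\}$ and $\{x,x+1\}$, so both flips are incident to each of these two boundary edges.

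For the third flip, I would use that $x$ is the attaching vertex of one of the combs of $A^+$, each of which has at least two teeth, so the interior degree of $x$ is $0$ in $T$ and at least $2$ in $A^+$. Tracking this degree along $(T_j,\ldots,T_k)$ then forces at least two flips that add an edge at $x$; the first of them is the ear-destruction flip already identified. Just before the second such addition, the interior degree of $x$ is at most $1$: no further addition flip at $x$ has taken place since the ear destruction, and any intervening flip that decreases $x$'s interior degree can only lower it. Consequently the cyclic order of edges at $x$ contains at most three edges, and the two consecutive sides at $x$ of this flip's quadrilateral necessarily include at least one of $\{x-1,x\}$ or $\{x,x+1\}$.

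These three flips are distinct, and by the incidence count above, one of the two boundary edges $\{x-1,x\}$ or $\{x,x+1\}$ is incident to all three of them. Applying Lemma~\ref{Clemma.1} with this boundary edge $(p,q)$ (so $p\in\{x-1,x\}\subseteq\{c,\ldots,d\}$) then yields $\delta(A)\geq\delta(A\contract{p})+3$, as desired. The main obstacle is securing the incidence of the third flip: without the degree-tracking argument, the second addition at $x$ could a priori be performed inside a quadrilateral whose two sides at $x$ are both interior edges, and such a flip would then be incident to no boundary edge at $x$ at all.
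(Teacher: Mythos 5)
Your proof is correct, and although it opens the same way as the paper's --- both arguments first extract an ear of $T$ at a vertex $x$ with $c<x\leq d$ (the paper via the two-ears property of the sub-triangulation induced on the arc between the endpoints of the given edge, you via a minimal-span edge; these are equivalent) --- the counting step is genuinely different. The paper splits the geodesic at $T$, applies Lemma~\ref{Clemma.2} to the pair $\{T,A^+\}$ (using the comb of $A^+$ at $x$) to save two flips on the second half, saves one more flip on the first half via Lemma~\ref{Clemma.1}, and concatenates the two deleted sub-paths by the triangle inequality. You instead exhibit three distinct flips of the single geodesic all incident to one boundary edge at $x$: the ear-creation and ear-destruction flips, each incident to both $\{x-1,x\}$ and $\{x,x+1\}$, plus a second edge-addition at $x$ forced by the jump of the interior degree of $x$ from $0$ in $T$ to at least $2$ in $A^+$; then Lemma~\ref{Clemma.1} is applied once with $f=3$. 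Your key observation --- that just before the second addition the interior degree of $x$ is at most $1$, so the quadrilateral of that flip must have a boundary edge of $x$ as a side --- plays the role of the case analysis inside Lemma~\ref{Clemma.2}, and your route avoids both the triangle inequality on deleted pairs and the splitting of the geodesic. Two small points deserve to be made explicit: first, that a centrally symmetric flip changes the interior degree of $x$ by at most one in each direction (the two removed, or added, edges of a flip can both meet $x$ only if they coincide with the diagonal $\{x,\bar{x}\}$), which is what guarantees at least two addition flips at $x$; second, the verification that $\{x-1,x+1\}$ belongs to neither $A^-$ (every vertex of $\{c,\dots,d\}$ is joined to vertex $0$ there) nor $A^+$ (comb at $x$), which you assert but do not check.
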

\begin{proof}
Consider a geodesic $(T_i)_{0\leq{i}\leq{k}}$ from $A^-$ to $A^+$ and consider an integer $j$ so that $0\leq{j}\leq{k}$. Assume that triangulation $T_j$ has an interior edge whose two vertices belong to $\{c, ..., d, \bar{0}\}$. Denote these vertices by $q$ and $r$ with the convention that $q$ is less than $r$.

In this case, triangulation $T_j$ necessarily has an ear at some vertex $p$ so that $q<p<r$. Indeed, since $T_j$ contains $\{q,r\}$, it induces a triangulation $U$ of the polygon whose vertices are vertices $q$ to $r$. Any triangulation of a polygon with at least four vertices has at least two ears. Hence, if $U$ has at least four vertices, then one of its ears is an ear at some vertex $p$ so that $q<p<r$. If $U$ has exactly three vertices, then it is made up of a single triangle, and this triangle is an ear at vertex $p=q+1$. Note that $U$ cannot have less than $3$ vertices because $\{q,r\}$ is an interior edge of $T_j$. Therefore, $U$ always has an ear at some vertex $p$ so that $q<p<r$ and this ear is also an ear of $T_j$.

Now observe that triangulations $T_j$ and $A^+$ satisfy the conditions of Lemma \ref{Clemma.2}. Note, in particular, that the triangles of $A^+$ incident to $\{p-1,p\}$ and to $\{p,p+1\}$ do not share an edge because, by definition, $A^+$ has a comb at vertex $p$. Therefore, there exists $x\in\{p-1,p\}$ so that
\begin{equation}\label{Csection3.lemma.1.eq.1}
k-j\geq\delta(\{T_j\contract{x},A^+\contract{x}\})+2\mbox{.}
\end{equation}

Further note that $\{x,x+1\}$ is not incident to the same triangle in $A^-$ and in $T_j$ because $A^-$ does not have an ear at vertex $p$. Hence, at least one of the first $j$ flips along $(T_i)_{0\leq{i}\leq{k}}$ is incident to $\{x,x+1\}$. By Lemma \ref{Clemma.1},
\begin{equation}\label{Csection3.lemma.1.eq.2}
j\geq\delta(\{A^-\contract{x},T_j\contract{x}\})+1\mbox{.}
\end{equation}

Since $k$ is precisely the distance of pair $A$, combining (\ref{Csection3.lemma.1.eq.1}) with (\ref{Csection3.lemma.1.eq.2}) and using the triangle inequality yields
$$
\delta(A)\geq\delta(A\contract{x})+3\mbox{.}
$$

Finally, as $q<p<r$ and as $q$ and $r$ belong to $\{c, ..., d, \bar{0}\}$, vertex $p$ must satisfy $c<p\leq{d}$. Since $x\in\{p-1,p\}$, this proves that $c\leq{x}\leq{d}$. \qed
\end{proof}

The following theorem, whose conditions are complementary to those of Lemmas \ref{Clemma.X1} and \ref{Clemma.X3} can now be proven. It provides the first deletion for a sequence of deletions under which the family of $(a,b,c,d)$-pairs is stable:

\begin{theorem}\label{Ctheorem.2}
Consider an $(a,b,c,d)$-pair $A$ so that $c<d$. If at most $2$ flips are incident to edge $\{0,1\}$ along a geodesic between $A^-$ and $A^+$, then there exists a vertex $x$ of these triangulations so that $c\leq{x}\leq{d}$ and
$$
\delta(A)\geq\delta(A\contract{x})+3\mbox{.}
$$
\end{theorem}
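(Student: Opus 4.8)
The plan is to distinguish cases according to how a fixed geodesic $(T_i)_{0\le i\le k}$ from $A^-$ to $A^+$ behaves near vertex $0$, and to use Lemma \ref{Csection.3.lemma.1} as the ``good case'' that produces the desired deletion vertex in $\{c,\dots,d\}$. The hypothesis that at most $2$ flips along the geodesic are incident to $\{0,1\}$ is what forces the geodesic to produce such an interior edge at some intermediate triangulation. First I would recall that in $A^-$ the triangle incident to $\{0,1\}$ is the ``first tooth'' of the comb at vertex $1$, whereas in $A^+$ the triangle incident to $\{0,1\}$ is the ear at vertex $0$; in particular these are different, so at least one flip along the geodesic is incident to $\{0,1\}$, and by the hypothesis there are exactly one or exactly two such flips. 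I would also note that the diagonal $\{0,\bar 0\}$ belongs to $A^-$ (after the normalization of Section \ref{Csection.1}) but not to $A^+$ when $c<d$, since the diagonal of $A^+$ is $\{c,\bar c\}$ with $c<d$ — so some flip along the geodesic must destroy $\{0,\bar 0\}$.

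Next I would track which vertex receives the edges incident to $0$ as the geodesic runs. In $A^-$, the interior edges incident to vertex $0$ are exactly the teeth of its comb, namely $\{0,\bar0\}$ together with $\{0,x\}$ for $c\le x\le d$. Because at most two flips touch $\{0,1\}$, the triangle of $T_i$ on the $\{0,1\}$ side can change shape at most twice, which severely constrains how these $0$-incident edges can be dismantled: flips not incident to $\{0,1\}$ that remove an edge $\{0,x\}$ must replace it by another edge incident to $0$, so the ``fan'' at vertex $0$ can only shrink by peeling off its extreme edge (the one towards $d$), and the vertices reached this way lie in $\{c,\dots,d\}$. I would argue that, at the moment $\{0,\bar 0\}$ is first destroyed, one of two things has happened: either a previous triangulation $T_j$ already contained an interior edge with both endpoints in $\{c,\dots,d,\bar 0\}$ (a ``chord'' cut out by the shrinking fan), in which case Lemma \ref{Csection.3.lemma.1} applies directly and gives the vertex $x$ with $c\le x\le d$ and $\delta(A)\ge\delta(A\contract x)+3$; or the fan at $0$ is still essentially intact when $\{0,\bar0\}$ is flipped, and then I would show the flip destroying $\{0,\bar0\}$, together with the at most two flips incident to $\{0,1\}$ and the way $A^+$ looks near vertex $d$, again forces such a chord $\{q,r\}\subseteq\{c,\dots,d,\bar0\}$ to appear at some intermediate triangulation, so that Lemma \ref{Csection.3.lemma.1} again finishes the argument.

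I expect the main obstacle to be the bookkeeping in the second case: showing rigorously that when only one or two flips are spent at $\{0,1\}$, the geodesic simply cannot dismantle both the diagonal $\{0,\bar0\}$ and the comb at $0$ without at some point creating an interior edge of $T_j$ whose two endpoints both lie in $\{c,\dots,d,\bar0\}$. The cleanest way I see to do this is a counting/position argument: assign to each triangulation along the geodesic the position of the ``last'' vertex still joined to $0$ on the upper side; this quantity starts at $d$ in $A^-$, it can only decrease by flips that are not incident to $\{0,1\}$ (each such flip decreasing it by moving the extreme tooth inward and thereby creating precisely a chord with both endpoints in $\{c,\dots,d\}$), and any flip that genuinely separates vertex $0$ from that side must be incident to $\{0,1\}$; combined with the flip that removes $\{0,\bar0\}$ and the budget of at most two $\{0,1\}$-flips, this pins down a triangulation $T_j$ with the required chord. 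Once Lemma \ref{Csection.3.lemma.1} is invoked, the inequality $\delta(A)\ge\delta(A\contract x)+3$ with $c\le x\le d$ is immediate, and the proof is complete.
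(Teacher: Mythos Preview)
Your proposal has a genuine gap. The central claim --- that ``flips not incident to $\{0,1\}$ that remove an edge $\{0,x\}$ must replace it by another edge incident to $0$'' --- is false. Flipping $\{0,x\}$ replaces it by the other diagonal $\{y,y'\}$ of the surrounding quadrilateral, and neither $y$ nor $y'$ equals $0$; the symmetric companion flip likewise need not touch $0$. Consequently the fan at $0$ can be broken from the \emph{middle}, not only by peeling off its extreme tooth, and your ``last vertex still joined to $0$'' is not a monotone quantity controlled solely by flips incident to $\{0,1\}$. The counting/position argument you sketch in the final paragraph therefore does not go through as stated, and the reduction to Lemma~\ref{Csection.3.lemma.1} is not established.

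The paper's argument is organized quite differently: it does not track the fan at $0$ but instead the boundary edge $\{\bar 0,d\}$. If three or more flips along the geodesic are incident to $\{\bar 0,d\}$, Lemma~\ref{Clemma.1} already gives $x=d$. Otherwise there are exactly two such flips (one alone would have to turn the diagonal $\{0,\bar 0\}$ into $\{\bar 1,d\}$, which is not a diagonal). Those two flips are then analysed explicitly, splitting on whether the first one introduces an edge $\{\bar 0,z\}$ with $0<z<d$ or the diagonal $\{d,\bar d\}$. In both branches the second of these flips is incident to $\{0,1\}$, and the budget of at most two $\{0,1\}$-flips pins down the unique earlier flip that removes $\{0,c\}$; this forces a chord with both endpoints in $\{c,\dots,d,\bar 0\}$ (so Lemma~\ref{Csection.3.lemma.1} applies), except in the residual subcase $c=d-1$, where one instead counts three flips incident to the boundary edge $\{c,d\}$ and takes $x=c$ via Lemma~\ref{Clemma.1}. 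If you want to salvage your approach, you would need to replace the incorrect fan-monotonicity claim by an argument of this precision.
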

\begin{proof}
Consider some geodesic $(T_i)_{0\leq{i}\leq{k}}$ from $A^-$ to $A^+$. Assume that $c<d$ and that that at most $2$ flips are incident to edge $\{0,1\}$ along $(T_i)_{0\leq{i}\leq{k}}$. If $3$ or more flips are incident to $\{\bar{0},d\}$ along this geodesic, then Lemma \ref{Clemma.1} immediately provides the desired result with $x=d$.
\begin{figure}
\begin{centering}
\includegraphics{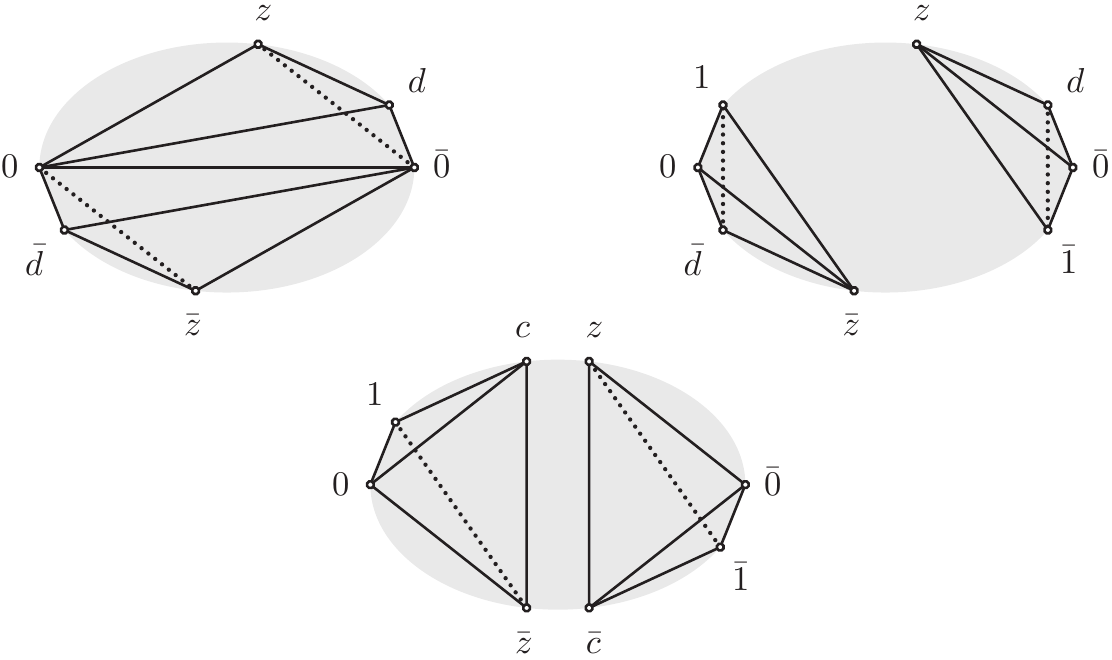}
\caption{The $j$-th (top left), $j'$-th (top right), and $j''$-th (bottom) flips along path $(T_i)_{0\leq{i}\leq{k}}$ used in the proof of Theorem \ref{Ctheorem.2}, in case the first of these flips introduces edge $\{\bar{0},z\}$ with $z<d$. The edges introduced by each flip are dotted.}\label{Cfigure.thm2.1}
\end{centering}
\end{figure}
Hence, it is further assumed that at most $2$ flips are incident to edge $\{\bar{0},d\}$ along $(T_i)_{0\leq{i}\leq{k}}$.

Observe that $\{\bar{0},d\}$ is not incident to the same triangle in $A^-$ and in $A^+$. Hence, at least one flip must be incident to $\{\bar{0},d\}$ along $(T_i)_{0\leq{i}\leq{k}}$. In fact, there must be exactly two such flips. Indeed, the unique such flip would otherwise replace the diagonal $\{0,\bar{0}\}$ by edge $\{\bar{1},d\}$. However, since $d$ is greater than $1$, the latter edge cannot be a diagonal and such a flip is therefore impossible. Assume that the two flips incident to edge $\{\bar{0},d\}$ along $(T_i)_{0\leq{i}\leq{k}}$ are the $j$-th one and the $j'$-th one, with $j<j'$. Observe that the $j$-th flip along $(T_i)_{0\leq{i}\leq{k}}$ either introduces edge $\{\bar{0},z\}$ where $0<z<d$ or edge $\{d,\bar{z}\}$ where $0<z\leq{d}$. The two cases will be reviewed separately.

First assume that the $j$-th flip along $(T_i)_{0\leq{i}\leq{k}}$ introduces edge $\{\bar{0},z\}$ where $0<z<d$. This flip must then be as shown top left on Fig. \ref{Cfigure.thm2.1}. In this case, the $j'$-th flip along $(T_i)_{0\leq{i}\leq{k}}$ necessarily replaces $\{\bar{0},z\}$ by $\{\bar{1},d\}$ as shown top right on the same figure. Observe that the latter flip is incident to $\{0,1\}$. There must be at least one other flip incident to this edge, taking place earlier along path $(T_i)_{0\leq{i}\leq{k}}$ because edge $\{0,1\}$ is incident to distinct triangles in $A^-$ and in $T_{j'-1}$. Say the first flip incident to $\{0,1\}$ along $(T_i)_{0\leq{i}\leq{k}}$ is the $j''$-th one. Since at most $2$ flips along path $(T_i)_{0\leq{i}\leq{k}}$ are incident to edge $\{0,1\}$, then the $j'$-th and the $j''$-th flip are the only two such flips. In particular, the latter flip must replace the triangle of $A^-$ incident to $\{0,1\}$ by the triangle of $T_{j'-1}$ incident to the same edge. Therefore this flip removes $\{0,c\}$ and replaces it by $\{1,\bar{z}\}$ as shown in the bottom of Fig. \ref{Cfigure.thm2.1}. As one can see in the figure, $z$ cannot be less than $c$. Indeed, triangulation $T_{j''-1}$ would otherwise contain crossing edges, as for instance $\{0,c\}$ and $\{\bar{0},z\}$.

This shows that $\{\bar{0},z\}$ is an interior edge of $T_{j''}$ whose two vertices belong to $\{c, ..., d, \bar{0}\}$. Therefore, the desired result follows from Lemma \ref{Csection.3.lemma.1}.

Now assume that the $j$-th flip along path $(T_i)_{0\leq{i}\leq{k}}$ introduces edge $\{d,\bar{z}\}$ where $0<z\leq{d}$. Observe that this flip removes the diagonal $\{0,\bar{0}\}$ from $A^-$. Therefore, $\{d,\bar{z}\}$ must also be a diagonal, which proves that $z=d$.
\begin{figure}
\begin{centering}
\includegraphics{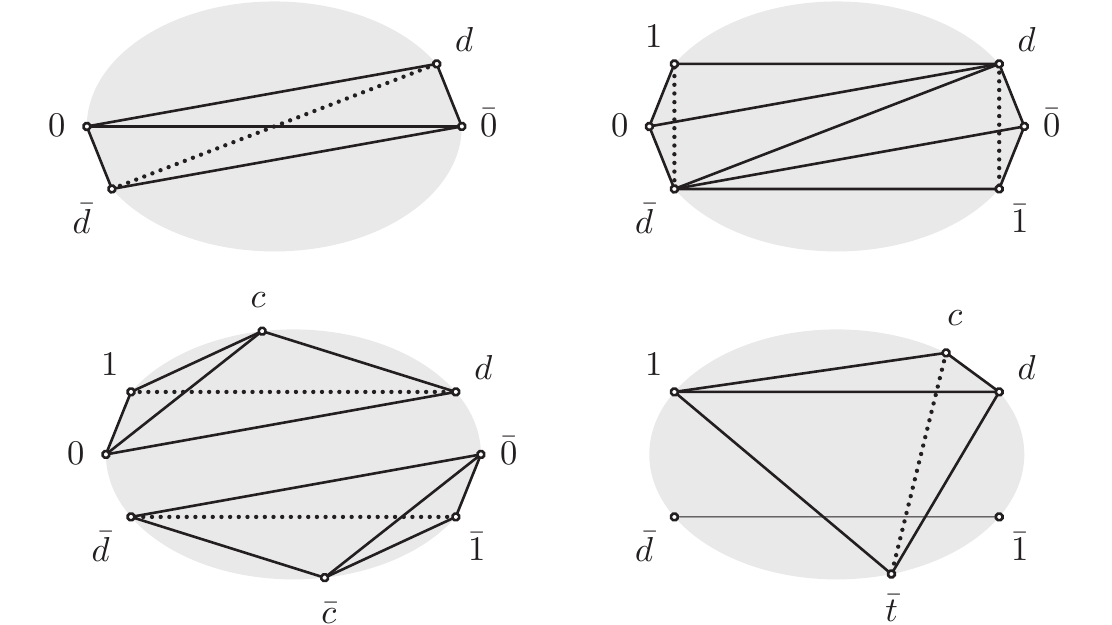}
\caption{The $j$-th (top left), $j'$-th (top right), and $j''$-th (bottom left) flips along path $(T_i)_{0\leq{i}\leq{k}}$ used in the proof of Theorem \ref{Ctheorem.2} when the first of these flips introduces edge $\{d,\bar{z}\}$ where $0<z\leq{d}$. The flip shown bottom right cannot be performed within a centrally symmetric triangulation, because some edges of this triangulation would otherwise be crossing. The edges introduced by each flip are dotted.}\label{Cfigure.thm2.2}
\end{centering}
\end{figure}
In this case, the $j$-th flip along path $(T_i)_{0\leq{i}\leq{k}}$ is necessarily the one shown top left on Fig. \ref{Cfigure.thm2.2}, and the $j'$-th flip along $(T_i)_{0\leq{i}\leq{k}}$ must replace $\{\bar{0},\bar{d}\}$ by $\{\bar{1},d\}$ as shown top right on the same figure. Observe that the latter flip is incident to edge $\{0,1\}$. There must be at least one other flip incident to this edge, taking place earlier along path $(T_i)_{0\leq{i}\leq{k}}$. Indeed, edge $\{0,1\}$ is incident to distinct triangles in $A^-$ and in $T_{j'-1}$ because $c$ is less than $d$. Say the first flip incident to $\{0,1\}$ along $(T_i)_{0\leq{i}\leq{k}}$ is the $j''$-th one. Since at most $2$ flips along path $(T_i)_{0\leq{i}\leq{k}}$ are incident to edge $\{0,1\}$, then the $j'$-th and the $j''$-th flip are the only two such flips. In particular, the latter flip must replace edge $\{0,c\}$ by edge $\{1,d\}$ as shown bottom left on Fig. \ref{Cfigure.thm2.2}.

Observe that triangulation $T_{j''}$ contains edge $\{c,d\}$. If $c$ is less than $d-1$, then this edge is an interior edge of $T_{j''}$ and the result follows from Lemma \ref{Csection.3.lemma.1}. If $c$ is equal to $d-1$, then $\{c,d\}$ is a boundary edge of $T_{j''}$ and the $j''$-th flip along $(T_i)_{0\leq{i}\leq{k}}$ is incident to it. Observe that $\{c,d\}$ is not incident to the same triangle in $T_{j''}$ and in $A^+$. Hence, at least one of the last $k-j''$ flips along $(T_i)_{0\leq{i}\leq{k}}$ is incident to edge $\{c,d\}$. In fact, there must be at least two such flips. Otherwise, the unique such flip must replace $\{1,d\}$ by $\{c,\bar{t}\}$ as shown bottom right on Fig. \ref{Cfigure.thm2.2}, where $t$ is the vertex so that edges $\{c,\bar{t}\}$ and $\{d,\bar{t}\}$ belong to triangulation $A^+$. In particular, $1<t<d$. This flip cannot occur within a centrally symmetric triangulation. Indeed, by symmetry, this triangulation would then also contain edge $\{\bar{1},\bar{d}\}$, sketched as a thin line in Fig. \ref{Cfigure.thm2.2}. However, $\{\bar{1},\bar{d}\}$ crosses at least two edges of the triangulation as, for instance edges $\{1,\bar{t}\}$ and $\{d,\bar{t}\}$.

This shows that at least two of the last $k-j''$ flips along $(T_i)_{0\leq{i}\leq{k}}$ are incident to edge $\{c,d\}$. Since the $j''$-th flip along this path is also incident to $\{c,d\}$, then Lemma \ref{Clemma.1} provides the desired result with $x=c$. \qed
\end{proof}

Consider an $(a,b,c,d)$-pair $A$ satisfying the requirements of Theorem \ref{Ctheorem.2}. When the vertex $x$ provided by this theorem satisfies $c\leq{x}<d$, then its deletion from $A^-$ and from $A^+$ results in an $(a,b,c,d-1)$-pair up to the vertex labels of the resulting triangulations. In particular:

\begin{lemma}\label{Clemma.X4}
Consider an $(a,b,c,d)$-pair $A$. If some vertex $x$ of triangulations $A^-$ and $A^+$ satisfies $c\leq{x}<d$, then there exists an $(a,b,c,d-1)$-pair $B$ whose distance is equal to that of pair $A\contract{x}$.
\end{lemma}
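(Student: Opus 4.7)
The plan is to show that $A\contract x$, after relabeling its remaining $2d$ vertices clockwise from $0$ to $2d-1$, is itself an $(a,b,c,d-1)$-pair, which I call $B$. Since the distance of a pair of centrally symmetric triangulations depends only on the combinatorial flip graph and not on particular vertex labels, the equality $\delta(B)=\delta(A\contract x)$ follows at once.

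For $A^-$: since $c\leq{x}<d$, the edge $\{0,x\}$ is a tooth of the comb at vertex $0$, and $x$ is incident to no other feature of $A^-$, because the comb at vertex $1$, the zigzag, and the ear at $k$ all lie on vertices of label at most $b<c$. The deletion merges $\{0,x\}$ with $\{0,x+1\}$, so the comb at $0$ loses exactly one tooth; by central symmetry, the comb at $\bar{0}$ loses one as well. After relabeling, the new comb at $0$ has teeth $\{0,c\},\ldots,\{0,d-1\}$ together with the new diagonal $\{0,d\}$, which is precisely the structure required of the first triangulation in an $(a,b,c,d-1)$-pair. For $A^+$: the ear at vertex $0$ and the central zigzag, whose vertices $l,l+1,\ldots,c-1$ satisfy $l<k\leq{b}<c\leq{x}$, are untouched by the deletion. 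The right-hand combs, attached at $c,c+1,\ldots,d$, are affected in one place only: replacing $x$ by $x+1$ identifies the last tooth of the comb at $x$ with the first tooth of the comb at $x+1$, merging these two combs into a single one. After relabeling, the combs end up attached at consecutive vertices $c,c+1,\ldots,d-1$, yielding the $d-c$ combs required, and the left-hand side is adjusted symmetrically by the deletion of $\bar{x}$.

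It remains to verify that the parameter $a=(\tau^-+\tau^+)/2+1$ is preserved. The combs at $0$ and $\bar{0}$ in $A^-$ each lose one interior tooth, regardless of whether $\{0,x\}$ is an interior or an outer tooth of the original comb, so $\tau^-$ decreases by $2$. Dually, the merging of two combs into one on each side of $A^+$ converts the two outer teeth facing each other across the merge into interior teeth, so $\tau^+$ increases by $2$. These effects cancel, giving $a'=a$, while trivially $b'=b$, $c'=c$ and $d'=d-1$. The main obstacle is bookkeeping: one has to treat the boundary case $x=c$ (in which $\bar{x}=\bar{c}$ is itself the first comb vertex of $A^+$, whose entire comb gets absorbed into its neighbor) carefully, and to check that the new quadruple $(a,b,c,d-1)$ continues to satisfy the defining inequalities (\ref{Cequation.0.1}), (\ref{Cequation.0.1.5}), and (\ref{Cequation.0.2}); the first two follow immediately from the hypothesis $c\leq{x}<d$ and the original inequalities.
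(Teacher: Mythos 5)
Your proof takes essentially the same route as the paper's: relabel $A\contract{x}$ and verify directly that it is an $(a,b,c,d-1)$-pair, the key point being that the deletion removes one interior tooth per side from the comb at vertex $0$ in $A^-$ while creating one per side by merging the combs at $x$ and $x+1$ in $A^+$, so that $a$ is preserved. The one item you leave open, inequality (\ref{Cequation.0.2}) for the new quadruple, is glossed over in the paper as well --- it does not follow from $c<d$ alone, but it does hold in the only situation where the lemma is invoked, namely under the standing assumption $d-\lfloor{b/2}\rfloor>a+2$ in the proof of Theorem \ref{Ctheorem.1} --- so your proof matches the paper's in both substance and level of rigor.
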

\begin{proof}
Let $x$ be a vertex of triangulations $A^-$ and $A^+$ so that $c\leq{x}<d$. Consider the triangulations $B^-$ and $B^+$ obtained by relabeling the vertices of $A^-\contract{x}$ and $A^+\contract{x}$ clockwise from $0$ to $2d-1$ in such a way that vertex $0$ keeps its label. Call $B$ the pair $\{B^-,B^+\}$.

It turns out that $B$ is an $(a,b,c,d-1)$-pair. In particular, the quadruple $(a,b,c,d-1)$ satisfies (\ref{Cequation.0.1}), (\ref{Cequation.0.1.5}), and (\ref{Cequation.0.2}) because $c$ is less than $d$. Moreover, the total number of interior teeth of triangulations $A^-$ and $A^+$ is not changed by the deletion. Indeed, this deletion removes an interior tooth of the comb at vertex $0$ in $A^-$, but it also creates a new interior tooth as it merges the combs at vertices $x$ and $x+1$ within triangulation $A^+$. \qed
\end{proof}

When the vertex $x$ provided by Theorem \ref{Ctheorem.2} is equal to $d$, then its deletion only initiates a sequence of deletions that leaves the family of $(a,b,c,d)$-pairs stable. The rest of the sequence will be obtained from Lemma \ref{Clemma.3}:

\begin{lemma}\label{Clemma.X2}
Consider an $(a,b,c,d)$-pair $A$. Call $t$ the number of interior edges of $A^+$ incident to vertex $d$. If $a+b/2+2<d$ and if $c<d$, then there exists an integer $c'$ satisfying $b-2t+2<c'\leq{c-t+1}$ and an $(a,b-2t+2,c',d-t)$-pair $B$ so that the following inequality holds:
$$
\delta(A\contract{d})\geq\delta(B)+2(t-1)\mbox{.}
$$
\end{lemma}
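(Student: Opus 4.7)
The plan is to extend the strategy of Lemma \ref{Clemma.X3} to the case $c<d$, applying Lemma \ref{Clemma.3} to $A\contract{d}$ and then identifying the resulting pair as an $(a,b-2t+2,c',d-t)$-pair for an appropriate $c'$.

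First I would analyze $A^+\contract{d}$ to exhibit a long interior edge at vertex $0$. The comb at vertex $d$ in $A^+$ has $t$ teeth $\{d,\bar{1}\},\{d,\bar{2}\},\ldots,\{d,\bar{t}\}$; deleting $d$ sends each to $\{\bar{0},\bar{s}\}$, the first becoming the boundary edge $\{\bar{0},\bar{1}\}$ and the remaining $t-1$ becoming interior edges of $A^+\contract{d}$. By central symmetry $\{0,2\},\{0,3\},\ldots,\{0,t\}$ are interior edges of $A^+\contract{d}$ as well, so in particular $\{0,t\}$ belongs to $A^+\contract{d}$. I then apply Lemma \ref{Clemma.3} to $A\contract{d}$ with $n=t$ and $p_i=i$ for $0\leq i\leq t$. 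The required condition that the triangles of $A^-\contract{d}$ incident to $\{0,1\},\{1,2\},\ldots,\{t-1,t\}$ be pairwise edge-disjoint holds: the first two triangles $\{0,1,c\}$ and $\{1,2,b\}$ share only vertex $1$ since $b<c$, and the remaining triangles lie in the zigzag of $A^-$ and are edge-disjoint by the zigzag property, a regime valid because $t\leq l<k=\lfloor b/2\rfloor+1$. This yields a pair $Q$ obtained from $A\contract{d}$ by deleting all but one of $\{0,1,\ldots,t-1\}$ (together with their symmetric counterparts), satisfying
$$
\delta(A\contract{d})\geq\delta(Q)+2(t-1).
$$

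The remaining step is to show that $Q$, after clockwise relabeling from $0$ to $2(d-t)+1$, is an $(a,b-2t+2,c',d-t)$-pair $B$ for some $c'$ with $b-2t+2<c'\leq c-t+1$. I would track how the comb at $0$ of $A^-$ (whose tooth $\{0,d\}$ has already been absorbed into $\{0,\bar{0}\}$), the comb at $1$ of $A^-$, the zigzag of $A^-$, the combs of $A^+$ at $\bar{c},\ldots,\overline{d-1}$, and the new comb at $\bar{0}$ produced by the deletion of $d$ are each reshaped by the subsequent deletions. A direct bookkeeping shows that the total number of interior teeth is preserved, so $a$ is unchanged. The new parameter $c'$ is determined by the retained vertex $r\in\{0,\ldots,t-1\}$, and the defining inequalities for $(a,b-2t+2,c',d-t)$ are automatic: (\ref{Cequation.0.2}) is literally the hypothesis $a+b/2+2<d$, while (\ref{Cequation.0.1.5}) reduces to $t\leq a+b-d+2=l$, which holds because the comb at $d$ in $A^+$ cannot have more teeth than the combs region has top vertices.

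The main obstacle is this final structural identification. Since Lemma \ref{Clemma.3} does not let us freely choose the retained vertex $r$, every possible $r$ must be shown to yield a pair interpretable as an $(a,b-2t+2,c',d-t)$-pair for an appropriate $c'$ in the claimed range; should some choice of $r$ leave a discrepancy between the combs at $0$ and at $1$ in $Q^-$ and in $Q^+$, one would supplement the argument with Lemma \ref{Clemma.1} on a well-chosen boundary edge to reconcile the pair with the standard form, as is done for $c=d$ in Lemma \ref{Clemma.X3}.
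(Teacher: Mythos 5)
Your proposal follows the paper's proof essentially verbatim: apply Lemma \ref{Clemma.3} to $A\contract{d}$ with $p_i=i$ and $n=t$, then identify the resulting pair, after relabeling, as an $(a,b-2t+2,c',d-t)$-pair whose parameter $c'$ depends on the retained vertex $r$, with the teeth bookkeeping showing $a$ is unchanged. The obstacle you flag at the end does not materialize: every retained vertex $r\in\{0,\ldots,t-1\}$ yields a valid pair directly, with $c'=c-t+1$ when $r=0$ and $c'=b-t-r+2$ otherwise (which lands exactly in the claimed range), so no supplementary use of Lemma \ref{Clemma.1} is needed.
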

\begin{proof}
Assume that $a+b/2+2<d$ and that $c<d$. Consider triangulations $A^-\contract{d}$ and $A^+\contract{d}$, depicted in Fig. \ref{Cfigure.3}.
\begin{figure}
\begin{centering}
\includegraphics{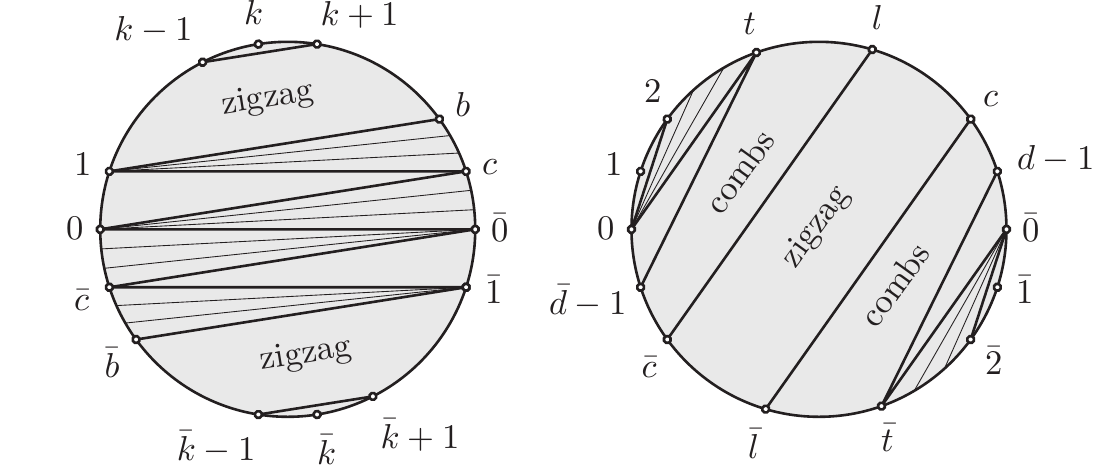}
\caption{Sketch of triangulations $A^-\contract{d}$ (left) and $A^+\contract{d}$ (right) when $c<d$, where $k$ and $l$ are respectively equal to $\lfloor{b/2}\rfloor+1$ and $a+b-c+4$. For the sake of clarity, the placement of the vertices slightly differs in the two triangulations.}\label{Cfigure.3}
\end{centering}
\end{figure}
Observe that pair $A\contract{d}$ satisfies the requirements of Lemma \ref{Clemma.3} with $p_i=i$ and $n=t$. This lemma therefore provides two triangulations $B^-$ and $B^+$ obtained from $A^-\contract{d}$ and $A^+\contract{d}$ by deleting all but one of the vertices $0$, ..., $t-1$, so that:
$$
\delta(A\contract{d})\geq\delta(B)+2(t-1)\mbox{,}
$$
where $B$ denotes the pair $\{B^-,B^+\}$. Let $r$ be the vertex among vertices $0$ to $t-1$ that is not deleted in the process. Note that, when the sequence of deletions is carried out within $A^-\contract{d}$, each of these deletions merges the comb at the deleted vertex with the comb at the next vertex clockwise. This results into a comb at vertex $r$ and a comb at vertex $t$ in $B^-$. Observe that the total number of interior teeth of $A^-$ and $A^+$ is not modified by the sequence of deletions carried out to build $B^-$ and $B^+$. Indeed, the deletion of vertex $d$ from $A^-$ removes one interior tooth from the comb at vertex $0$, and each of the other deletions creates a new interior tooth within this triangulation as these deletions each merge two combs. Therefore, recalling that these triangulations are centrally symmetric, the number of interior teeth of $B^-$ is greater by $2(t-2)$ than that of $A^-$. On the other hand, each of the deletions carried out within triangulation $A^+$ removes one tooth from the comb originally attached at vertex $d$. As these teeth are interior teeth of $A^+$ except for two of them,  $B^+$ has $2(t-2)$ interior teeth less than $A^+$.

Relabel the vertices of $B^-$ and $B^+$ clockwise from $0$ to $2d-2t+1$ in such a way that vertices $r$ and $t$ are respectively relabeled $0$ and $1$. After doing so, $B$ is an $(a,b-2t+2,c',d-t)$-pair, where $c'$ is equal to $c-t+1$ if $r=0$ and to $b-t-r+2$ otherwise. In particular, the desired bounds on $c'$ hold. Also, note that the quadruple $(a,b-2t+2,c',d-t)$ satisfies inequalities (\ref{Cequation.0.1}), (\ref{Cequation.0.1.5}), and (\ref{Cequation.0.2}) because $a+b/2+2$ and $c$ are both less than $d$. \qed
\end{proof}

Using Theorem \ref{Ctheorem.2}, and Lemmas \ref{Clemma.X1}, \ref{Clemma.X3}, \ref{Clemma.X4}, and \ref{Clemma.X2}, one can now prove Theorem \ref{Ctheorem.1}. Recall that, according to this theorem, any $(a,b,c,d)$-pair $A$ satisfies
$$
\delta(A)\geq3d-\left(\frac{b}{2}+\frac{2c-b}{a}+3a+5\right)\mbox{.}
$$
\begin{proof}[Theorem \ref{Ctheorem.1}]
Consider an $(a,b,c,d)$-pair $A$. The theorem will be proven by induction on $d-\lfloor{b/2}\rfloor$. Note that, according to inequality (\ref{Cequation.0.2}), this quantity is greater than $a+1$. 

First assume that $d-\lfloor{b/2}\rfloor$ is equal to $a+2$. Observe that $A^-$ and $A^+$ have no edge in common. Therefore, any geodesic between these triangulations has length at least $d$. Indeed, each flip along such a geodesic either removes a pair of centrally symmetric interior edges, or a diagonal. As $A^-$ has exactly $d-1$ pairs of centrally symmetric interior edges, and one diagonal, at least $d$ flips must be performed in order to remove all the edges of $A^-$, and therefore, to transform this triangulation into $A^+$:
\begin{equation}\label{Cequation.proofTh1.1}
\delta(A)\geq{d}\mbox{.}
\end{equation}

As $d=a+\lfloor{b/2}\rfloor+2$, one immediately obtains:
\begin{equation}\label{Cequation.proofTh1.1.5}
d\leq{a+\frac{b}{2}+2}\mbox{.}
\end{equation}

According to (\ref{Cequation.0.1}), $b\leq{d-1}$. Therefore, (\ref{Cequation.proofTh1.1.5}) yields:
\begin{equation}\label{Cequation.proofTh1.1.7}
d\leq{2a+3}\mbox{.}
\end{equation}

As the ratio of $2c-b$ and $a$ is positive, summing (\ref{Cequation.proofTh1.1.5}) with (\ref{Cequation.proofTh1.1.7}), and adding this ratio to the right-hand side results in the following inequality:
\begin{equation}\label{Cequation.proofTh1.2}
2d\leq\frac{b}{2}+\frac{2c-b}{a}+3a+5\mbox{.}
\end{equation}

One then obtains the desired inequality by combining (\ref{Cequation.proofTh1.1}) with (\ref{Cequation.proofTh1.2}). Now assume that $d-\lfloor{b/2}\rfloor$ is greater than $a+2$. Further assume that, for any $(a,b',c',d')$-pair $B$ so that $d'-\lfloor{b'/2}\rfloor$ is less than $d-\lfloor{b/2}\rfloor$ by $1$,
\begin{equation}\label{Cequation.proofTh1.4}
\delta(B)\geq3d'-\left(\frac{b'}{2}+\frac{2c'-b'}{a}+3a+5\right)\mbox{.}
\end{equation}

Recall that the conditions of Lemma \ref{Clemma.X1}, of Lemma \ref{Clemma.X3}, and of Theorem \ref{Ctheorem.2} are complementary. In the remainder of the proof, these conditions are reviewed one after the other, and the result is proven in each case.

First assume that at least $3$ flips are incident to $\{0,1\}$ along some geodesic between $A^-$ and $A^+$. In this case, the requirements of Lemma \ref{Clemma.X1} are satisfied (in particular, $a+b/2+2<d$ precisely because $d-\lfloor{b/2}\rfloor$ is greater than $a+2$), and this lemma provides an $(a,b',c',d')$-pair $B$ so that
\begin{equation}\label{Cequation.proofTh1.3}
\delta(A)\geq\delta(B)+5\mbox{,}
\end{equation}
where $b'$, $c'$, and $d'$ are respectively equal to $b-2$, $b-1$, and $d-2$. Note that $d'-\lfloor{b'/2}\rfloor$ is less than $d-\lfloor{b/2}\rfloor$ by $1$. Hence, by induction, inequality (\ref{Cequation.proofTh1.4}) holds for pair $B$. Combining this inequality with (\ref{Cequation.proofTh1.3}), and replacing $b'$, $c'$, and $d'$ by respectively $b-2$, $b-1$, and $d-2$, one obtains
$$
\delta(A)\geq3d-\left(\frac{b}{2}+\frac{b}{a}+3a+5\right)\mbox{.}
$$

Since $b\leq2c-b$, this proves that the result holds when at least $3$ flips are incident to $\{0,1\}$ along some geodesic between $A^-$ and $A^+$.

Now assume that $c=d$. Again, $a+b/2+2$ is less than $d$ because $d-\lfloor{b/2}\rfloor$ is greater than $a+2$. Therefore, by Lemma \ref{Clemma.X3}, there exists an $(a,b',c',d')$-pair $B$ that satisfies the following inequality:
\begin{equation}\label{Cequation.proofTh1.7}
\delta(A)\geq\delta(B)+2t-1\mbox{,}
\end{equation}
where $t$ is the number of interior edges of $A^+$ incident to vertex $d$. Moreover, $b'=b-2t+2$, $c'=b-2t+3$, and $d'=d-t$. Note that $d'-\lfloor{b'/2}\rfloor$ is less than $d-\lfloor{b/2}\rfloor$ by $1$. Hence, by induction, inequality (\ref{Cequation.proofTh1.4}) also holds in this case. Combining this inequality with (\ref{Cequation.proofTh1.7}) and replacing $b'$, $c'$ and $d'$ by respectively $b-2t+2$, $b-2t+2$ and $d-t$ yields:
\begin{equation}\label{Cequation.proofTh1.8}
\delta(A)\geq3d-\left(\frac{b}{2}+\frac{b-2t+4}{a}+3a+7\right)\mbox{.}
\end{equation}

Observe that, since $c=d$, the only combs in $A^+$ are the combs at vertices $d$ and $\bar{d}$. In particular, the number of interior teeth of $A^+$ is exactly $2t-4$. Moreover, $A^-$ has exactly $2c-2b-2$ interior teeth. As $a$ is obtained by adding $1$ to half the total number of interior teeth in $A^-$ and $A^+$, it follows that $t$ is equal to $a+b-c+2$. Replacing $t$ within (\ref{Cequation.proofTh1.8}) by the latter expression results in the desired inequality, and the result holds in this case.

Finally, assume that $c<d$ and that at most $2$ flips are incident to $\{0,1\}$ along some geodesic between $A^-$ and $A^+$. By Theorem \ref{Ctheorem.2}, there exists a vertex $x$ of $A^-$ and $A^+$ so that $c\leq{x}\leq{d}$ and
\begin{equation}\label{Cequation.proofTh1.48}
\delta(A)\geq\delta(A\contract{x})+3\mbox{.}
\end{equation}

If $c\leq{x}<d$, then Lemma \ref{Clemma.X4} provides an $(a,b',c',d')$-pair $B$ with the same distance as $A\contract{x}$, where  $b'=b$, $c'=c$, and $d'=d-1$. Note, in particular, that $d'-\lfloor{b'/2}\rfloor$ is less than $d-\lfloor{b/2}\rfloor$ by $1$. Hence, by induction, (\ref{Cequation.proofTh1.4}) holds for pair $B$. Replacing $A\contract{x}$ by $B$ in (\ref{Cequation.proofTh1.48}) and combining the resulting inequality with (\ref{Cequation.proofTh1.4}) provides the result. Now, if $x=d$, then the conditions of Lemma \ref{Clemma.X2} are satisfied. In particular, $a+b/2+2<d$ precisely because $d-\lfloor{b/2}\rfloor$ is greater than $a+2$. Hence, there exists an $(a,b',c',d')$-pair $B$ so that
\begin{equation}\label{Cequation.proofTh1.6}
\delta(A\contract{x})\geq\delta(B)+2(t-1)\mbox{,}
\end{equation}
where $t$ is the number of interior edges of $A^+$ incident to vertex $d$. Moreover, $b'=b-2t+2$, $d'=d-t$, and $c'$ satisfies
$$
b-2t+2<c'\leq{c-t+1}\mbox{.}
$$

Note that $d'-\lfloor{b'/2}\rfloor$ is less than $d-\lfloor{b/2}\rfloor$ by $1$. Therefore, by induction, inequality (\ref{Cequation.proofTh1.4}) also holds in this case. Combining (\ref{Cequation.proofTh1.4}), (\ref{Cequation.proofTh1.48}), and (\ref{Cequation.proofTh1.6}), and replacing $b'$, and $d'$ by respectively $b-2t+2$ and $d-t$ yields:
$$
\delta(A)\geq3d-\left(\frac{b}{2}+\frac{2c'-b+2t-2}{a}+3a+5\right)\mbox{.}
$$

Therefore, the result holds because $c'$ is not greater than $c-t+1$. \qed
\end{proof}

\section{Discussion}
\label{Csection.4}

It has been shown in this article that the diameter $\Delta$ of the $d$-dimensional cyclohedron is at least $5d/2-4\sqrt{d}-4$ and at most $\lceil{5d/2}\rceil-2$. In particular, this diameter grows like $5d/2$ when $d$ is large:
$$
\lim_{d\rightarrow\infty}\frac{\Delta}{d}=\frac{5}{2}\mbox{.}
$$
 
The values of $\Delta$ when $d$ is small can be obtained computationally. These values deceptively suggest a $7/3$ coefficient instead of the above $5/2$:
$$
\begin{array}{c|>{\centering\arraybackslash$} p{0.6cm} <{$}>{\centering\arraybackslash$} p{0.6cm} <{$}>{\centering\arraybackslash$} p{0.6cm} <{$}>{\centering\arraybackslash$} p{0.6cm} <{$}>{\centering\arraybackslash$} p{0.6cm} <{$}>{\centering\arraybackslash$} p{0.6cm} <{$}>{\centering\arraybackslash$} p{0.6cm} <{$}>{\centering\arraybackslash$} p{0.6cm} <{$}>{\centering\arraybackslash$} p{0.6cm} <{$}>{\centering\arraybackslash$} p{0.6cm} <{$}>{\centering\arraybackslash$} p{0.6cm} <{$}>{\centering\arraybackslash$} p{0.6cm} <{$}>{\centering\arraybackslash$} p{0.6cm} <{$}}
d & 1 & 2 & 3 & 4 & 5 & 6 & 7 & 8 & 9 & 10 & 11 & 12 & 13\\
\hline
\Delta & 1 & 3 & 5 & 7 & 9 & 11 & \bm{14} & 16 & 18 & \bm{21} & 23 & 25 & \bm{28}^\star\\
\end{array}
$$

In this table, the starred value is only a lower bound on $\Delta$ that is not necessarily sharp. An exact computation was not possible in this case due to prohibitive computation time. Each value of $\Delta$ is greater by $2$ than the preceding value, except for the ones shown in bold characters, corresponding to $d=7$ and $d=10$, that are greater by $3$ than the preceding value. Note that the lower bound on $\Delta$ when $d$ is equal to $13$, also shown in bold, is greater by $3$ than the preceding value in the table as well.

The last of the three infinite subfamilies of generalized associahedra whose diameter is not known exactly, but only asymptotically, is that of cyclohedra. However, the asymptotic behavior of this diameter is very different from its behavior at low dimensions, which suggests that evaluating the exact diameter of cyclohedra for all dimensions may turn out to be difficult.

\end{document}